\newtheorem{lemma}{Lemma}
\newtheorem{theorem}{Theorem}[section]
\newtheorem{definition}{Definition}
\newtheorem{example}{Example}
\newtheorem{proposition}{Proposition}
\newtheorem{remark}{Remark}
\def\tr{\text{tr}}
\def\la{\left\langle}
\def\ra{\right\rangle}
\def\e{\varepsilon}
\def\R{\mathbb R}
\def\E{\mathbb E}
\def\cL{\mathcal L}
\newcounter{bean}
\newcommand{\benuma}{\setlength{\labelwidth}{.25in}
	
	\begin{list}
		{(\alph{bean})}{\usecounter{bean}}}
	\newcommand{\eenuma}{\end{list}}
\begin{document}	
	\title[Mean-field control problems for BDSDEs]{On mean-field control problems for backward doubly stochastic systems}

	\author[J. Song]{Jian Song}
	\address{Research Center for Mathematics and Interdisciplinary Sciences, Shandong University, Qingdao, Shandong, 266237, China;
		and School of Mathematics, Shandong University, Jinan, Shandong, 250100, China}
	\email{txjsong@sdu.edu.cn}
	
	\author[M. Wang]{Meng Wang}
	\address{School of Mathematics, Shandong University, Jinan, Shandong, 250100, China}
	\email{wangmeng22@mail.sdu.edu.cn}
	 \date{\today}
	 		\maketitle
\begin{abstract}
  This article is concerned with stochastic control problems for  backward doubly stochastic differential equations of mean-field type, where the coefficient functions  depend on the joint distribution of the state process and the control process. We obtain the stochastic maximum principle which serves as a necessary condition for an optimal control, and we also prove its sufficiency under proper conditions. As a byproduct,  we  prove the well-posedness for a type of mean-field fully coupled forward-backward doubly stochastic differential equation arising naturally from the control problem, which is of interest in its own right. Some examples are provided to illustrate the applications of our results to control problems in the types of  scalar interaction and  first order interaction.
  	\end{abstract}

		\tableofcontents
		
		\section{Introduction}

 In this paper,  we are concerned with a control problem in which the state process $\{(y_t, z_t), t\in[0,T]\}$ is governed by the following equation		\begin{equation}\label{state}
			\left\{
			\begin{aligned}
				-dy_t=&f(t,y_t,z_t,u_t,\mathcal L(y_t,z_t,u_t))dt+ g(t,y_t,z_t,u_t,\mathcal L(y_t,z_t,u_t)) d\overleftarrow B_t\\&-z_t dW_t,\,\,\,\,t\in[0,T],\\
				y_T=&\xi.
			\end{aligned}
			\right.
		\end{equation}
In the above equation, the control process $\{u_t, t\in[0,T]\}$ is a given stochastic process;  $\mathcal L(y_t, z_t, u_t)$ stands for the law of the random vector $(y_t, z_t, u_t)$; $B$ and $W$ are two mutually independent Brownian motions;  the stochastic integral with respect to $B$ is a backward It\^o integral while the one with respect to $W$ is  forward.  This equation is called a mean-field backward doubly stochastic differential equation (MF-BDSDE) due to its dependence  on two Brownian motions as well as on the joint law of state and control processes.   The cost functional of the control problem is given by
		\begin{equation}\label{cost}
			J(u)=\E\left[ \int^T_0 h(t,y_t,z_t,u_t,\mathcal L(y_t,z_t,u_t))dt+\Phi(y_0,\mathcal L(y_0))\right].
		\end{equation}
		
Our goal of this paper is to obtain the stochastic maximum principle (SMP), a necessary condition for an optimal control,   i.e., a control minimizing $J(u)$.  	Below we briefly recall some related results,  which is by no means complete in the literature.

		Stochastic control problems have gained a particular interest due to their broad  applications in economics, finance, engineering, etc. The earliest works can be retrospected to Kushner \cite{72k} and Bismut \cite{73b}.  Among others,   the theory of general backward stochastic differential equations (BSDEs)  introduced in \cite{90pp}  plays an important role in the study of  stochastic control problems.  As an extension of BSDEs,  backward doubly stochastic differential equations  (BDSDEs) were introduced by Pardoux and Peng  in  \cite{94pp}.   We refer to Yong and Zhou \cite{99yz} and Zhang \cite{zhang17} for more details on stochastic control, BSDEs, and other related topics. 
		
	Mean-field models are useful to  characterize the asymptotic behavior when the size of the system is getting very large. Mean-field stochastic differential equations (MF-SDEs), also known as  equations  of McKean-Vlasov type,  were first introduced by Kac \cite{56k} when investigating physical systems with a large number of interacting particles. The approach of studying large particle systems pioneered by Kac now is called in the literature propagation of chaos  and  we refer to  Sznitman \cite{s91} for  further reading.   In recent years, mean-field theories for BSDEs and BDSDEs were investigated by Buckdahn et al. \cite{09bdlp} and Li and Xing \cite{lx22}, respectively.

 As is well known in the literature of game theory, it is in general hard to construct Nash equilibrium explicitly if the number of players is large.  The pioneer work of  Lasry and Lions \cite{07ll} proposed a framework of approximating Nash equilibrium for stochastic games with a large number of players.  Huang et al. \cite{06hmc} dealt with large games in a similar approach. Later on, Carmona and Delarue~\cite{13cd}  provided a  probabilistic analysis for large games formulated by Lasry and Lions, in which 	they resolved the limiting  optimal control  problem by studying   a mean-field forward-backward stochastic differential equation (MF-FBSDE).  We refer to \cite{15cd,18cd} and the references  therein for more details about   mean-field games and related topics.

  Mean-field control problems  have also attracted considerable attention  accompanying with the development of mean-field game theory. At the beginning, the investigation was focused on the control problems which involve the expected values; for instance,  Buckdahn et al. \cite{11bdl} obtained the global maximum principle for mean-field SDEs  (see also \cite{11ad}).  After Lions introduced the notion of derivatives with respect to probability measures in his seminal work \cite{lions07} (see also \cite{c12, 18cd}), a more general form of mean-field interaction where the law of the solution process is involved has been studied, see e.g. \cite{19cd,16blm}.
   We also refer to \cite{20hm,14ll,22ny} and the references  therein for more development on mean-field control problems.
		
	 Motivated by the existing works, in this paper we investigate the mean-field control problem  (\ref{state})-(\ref{cost}) for MF-BDSDEs and aim to obtain SMP. We remark that Han et al. \cite{10hpw} has obtained SMP for control problems involving such  BDSDEs without mean-field terms (see also  \cite{11sz,20zzy}). In our control problem \eqref{state}-\eqref{cost}, the state process  and the cost functional  both  depend on the joint distribution of the state process and the control process. 
Note that in our setting, the dependence on the joint distribution is rather general, and   in particular,  it includes the cases of  $\varphi(t,X_t,\E[X_t],u_t)$  and $\widetilde \E\big[\varphi(t,X_t,\widetilde X_t,u_t)\big]$	which are known as the \textit{scalar interaction} and \textit{first order interaction} of mean-field type, respectively.
These two cases will be treated  in Section \ref{example}  as  examples of applying our main result.

Let us finally summarize some difficulties and innovations of this work below.

 (i) From a modeling perspective,  BDSDE is a generalization of BSDE and hence can describe more phenomena in the real world. It is worth mentioning that  this generalization  is not trivial, for instance, classical It\^o's formula can not be directly applied due to the appearance of the backward It\^o integral. We refer to  \cite{94pp} for more details.
		
(ii) The dependence of the coefficient functions on probability measures leads to a failure of the classical  calculus. We will employ the concept of  L-derivative for  functions of probability measures initiated by P. L. Lions \cite{lions07} (see also \cite{c12,18cd}). 
			
(iii) We prove the well-posedness of the  fully coupled mean-field forward backward doubly stochastic differential equations (FBDSDEs) \eqref{mf-bdsde} which naturally arise when investigating the control problem. This type of equation was first introduced by Peng and Shi \cite{03ps} and later on was further investigated  for instance in \cite{10hpw}.

		This article is organized as follows. In Section \ref{2},  some preliminaries  of
		the L-derivative  of functions of probability measures is recalled. In Section \ref{3}, we prove our main result of stochastic maximum principle as well as a verification theorem.  Section \ref{4} is devoted to the investigation of a type of  fully coupled mean-field  BDSDE, which is of interest in its own right. Finally, we provide some example    s in Section \ref{example}.

To conclude this section, we introduce some notations that will be used throughout the article. For two vectors $u, v\in \R^n$, denote by $\la u,v\ra$  the scalar product of $u$ and $v$, by $\left\vert v \right\vert=\sqrt{\la v, v\ra}$  the Euclidean norm of $v$.  For  $A,B\in\R^{n\times d }$, we denote the scalar product of $A$ and $B$ by $\la A,B\ra=\tr\{AB^{\text{T}}\}$ and the norm of the matrix $A$ by $\left\Vert A\right\Vert=\sqrt{\tr\{AA^{\text{T}}\}}$, where the superscript $\text{T}$ stands for  the transpose of vectors or matrices. We also use the notation $\partial_x=\left(\frac{\partial}{\partial x_1},\cdots,\frac{\partial}{\partial x_n}\right)^\text{T}$ for $x\in \R^n$. Then for $\Psi:\R^n\rightarrow \R$, $\partial_x \Psi=\left( \frac\partial{\partial {x_i}} \Psi\right)_{n\times 1}$ is a column vector,  and for $\Psi:\R^n\rightarrow \R^d$,  $\partial_x \Psi=\left( \frac\partial{\partial {x_i}} \Psi_j\right)_{n\times d}$ is a $n\times  d$ matrix. Henceforth, we denote by $C$ a generic constant which can be different in different lines.

 \setcounter{equation}{0}
		\section{Preliminaries on L-derivative}\label{2}

			 In this section, we  collect some  preliminaries on L-differentiability for functions of probability laws which was initiated by P. L. Lions \cite{lions07}. We refer to \cite{c12} and \cite{18cd} for more details.
			 
For $m\in \mathbb N$, let  $\mathcal P_2(\R^m)$ be the set of probability measures on $\R^m$ with finite second moment.   	Denote by  $W_2(\cdot,\cdot)$ the 2-Wasserstein distance in $\mathcal{P}_2(\mathbb{R}^m)$, i.e., 
		$$W_2(\mu_1,\mu_2)=\inf\left\{ \left(\int_{\R^{2m}}|x-y|^2\rho(dx,dy)\right)^{\frac{1}{2}}\right\},$$
		where the infimum is taken over all $\rho\in \mathcal{P}_2(\mathbb{R}^{2m})$ with $\rho(dx,\R^m)=\mu_1(dx)$ and  $\rho(\R^m,dy)=\mu_2(dy)$. Then, $(\mathcal P_2(\R^m),W_2)$ is a polish space. It's obvious from the definition that 
		\[W(\mu_1,\mu_2)\leq \left(\E\Big[\big|X-Y\big|^2\Big]\right)^{\frac 12},\]
		here $X$ and $Y$ are $\R^m$-valued random variables with the distributions $\mu_1$ and $\mu_2$, respectively.

 For a function $H:\R^q\times \mathcal P_2(\R^m)\to \R$, we call $\widetilde H: \R^q\times L^2(\Omega;\R^m)\to \R$  a lifting of $H$ if $\widetilde H(x, Y) = H(x, \cL(Y))$,  where $\cL(Y)$ means the probability law of $Y$.  
			\begin{definition}
  A function $H: \R^q \times \mathcal P_2(\R^m)$ is said to be L-differentiable at $(x_0,\mu_0)\in \R^q\times \mathcal P_2(\R^m)$ if there exists a random variable $Y_0 \in L^2(\Omega;\R^m)$ with  $\cL(Y_0)=\mu_0$, such that the lifted function  $\widetilde H$
    is Fr\'echet differentiable at $(x_0, Y_0)$, i.e., there exists a linear continuous mapping \[[D\widetilde{H}](x_0,Y_0):\R^q\times L^2(\Omega ;\mathbb{R}^m)\rightarrow \mathbb{R}\] such that
    \begin{equation}\label{def:F-D}       \widetilde{H}(x_0+\Delta x, Y_0+\Delta Y)-\widetilde{H}(x_0,Y_0)=[D\widetilde{H}](x_0,Y_0)(\Delta x,\Delta Y)+o(|\Delta x|+ \|\Delta Y\|_{L^2}).
       \end{equation}
         \end{definition}
     
      Note that $\mathcal H: = \R^q\times L^2(\Omega;\mathbb{R}^m)$ is a Hilbert space with the inner product  \[\big< (x_1, Y_1),(x_2,Y_2)\big>_{\mathcal H}=\la x_1, x_2 \ra+\E[\la Y_1,Y_2\ra].\]  By Riesz representation theorem, the Fr\'echet derivative $[D\widetilde{H}](x_0, Y_0)$ can be viewed as an element $D\widetilde H(x_0, Y_0)$ in $\mathcal H$ in the sense that for all $(x, Y)\in \mathcal H$,
         \begin{equation}\label{e:F-D}
         [D\widetilde{H}](x_0,Y_0)(x, Y)=\big< D\widetilde{H}(x_0,Y_0), (x, Y)\big>_{\mathcal H}\,.
         \end{equation}
   Indeed, there exists a measurable function $g: \R^m\to  \R^m$  depending only on $\mu_0$ such that $D\widetilde{H}(x_0, Y)=g(Y)$ a.s. for all $Y$ with $\cL(Y)=\mu_0$.  Then, we  define the {\it L-derivative  of $H$ at $(x_0, \mu_0)$ along the random variable $Y$} by  $g(Y)$, which is denoted by $\partial_\mu H(x_0,\mu_0)(Y)$. Thus,  we have a.s.
   \[\partial_\mu H(x_0, \cL(Y)) (Y) = g(Y)= D\widetilde{H}(x_0, Y). \]

          \begin{example}\label{example-h}
          	Consider the following function $H$ on $\mathcal P_2(\R^m)$,         	\begin{align*}
          		H(\mu)=\int_{\R^m}h(y)\mu(dy),
          	\end{align*}
          where $h:\R^m\to \R$ is   twice differentiable with bounded second derivatives. Clearly, the lifted function $\widetilde H(Y)=\E\big[h(Y)\big]$ with $\cL(Y)=\mu$, and $\partial_\mu H(\cL(Y))(Y)=D\widetilde H(Y)=\partial_y h(Y)$ by \eqref{def:F-D} and \eqref{e:F-D}.  	\end{example}

  Similarly, for a function $H:\R^q\times \mathcal P_2(\R^n\times \R^{n\times l}\times \R^k)\to \R$ depending on  a vector $x\in \R^q$ and a joint probability law $\mu=(\mu_y, \mu_z, \mu_u)\in \mathcal P_2(\R^n\times \R^{n\times l}\times \R^k)$, we can define partial L-differentiability.  We say that $H$ is joint L-differentiable at $(x,\mu)$ if there exists a triple of random variables $(Y,Z,U)\in L^2(\Omega;\R^n\times \R^{n\times l}\times \R^k)$ with $\mathcal L(Y,Z,U)=\mu$ such that
  the lifted function $\widetilde H(x,Y,Z,U)=H(x,\mu)$ is Fr\'echet differentiable at $(x,Y,Z,U)$. Observing $\R^q\times L^2(\Omega;\R^n\times \R^{n\times l}\times \R^k)\cong \R^q\times L^2(\Omega;\R^n)\times L^2(\Omega; \R^{n\times l})\times L^2(\Omega;\R^k)$, 
  the partial L-derivatives $\partial_{\mu_y} H,\partial_{\mu_z} H$ and $\partial_{\mu_u} H$ at $(x, \mu)$ along  $(Y, Z, U)$ can be defined via the following identity
  \begin{align*}
  D\widetilde H(x,Y,Z,U)
  =\Big(\partial_xH(x,\mu),\partial_{\mu_y}H(x,\mu),\partial_{\mu_z}H(x,\mu),\partial_{\mu_u}H(x,\mu)\Big)(Y,Z,U).
\end{align*}
We remark that $\partial_x H(x,\mu) (Y, Z, U)$ actually does not  depend on $(Y, Z, U)$.  
 
 A standard result says  that joint continuous differentiability in the two arguments is equivalent to partial differentiability in each of the two arguments and joint
continuity of the partial derivatives. Hence, the joint continuity of $\partial_x H(x,\mu)$ means the joint continuity with respect to the Euclidean distance on $\R^q$ and the
2-Wasserstein distance on $\mathcal P_2(\R^n\times \R^{n\times l}\times \R^k)$; the joint continuity of $\partial_{\mu_y}H(x,\mu)$ is understood as the
joint continuity of the mapping $(x,Y,Z,U)\mapsto \partial_{\mu_y}H(x,\cL(Y, Z, U))(Y,Z,U)$ from $\R^q\times L^2(\Omega;\R^n\times \R^{n\times l}\times \R^k)$ to $L^2(\Omega;\R^n)$.\\

 \setcounter{equation}{0}

	\section{Stochastic maximum principle}\label{3}
	In this section, we aim to derive our main result of the stochastic maximum principle. First, we fix some mathematical notations,  formulate our control problem, and recall It\^o's formula for stochastic processes involving backward It\^o's integral. Then we present the assumptions which will be used throughout the paper. The  maximum principle  will be obtained via the  classical variational method. Assuming proper convexity conditions on the Hamiltonian, we prove a verification theorem, i.e., showing that the stochastic maximum principle is also a sufficient  condition for an optimal control.
	
			 \subsection{Some preliminaries for the control problem}
	
  On a probability space  $(\Omega,{\mathcal{F}},P)$ satisfying usual conditions, let $\{B_t\}_{t\geq 0}$ and $\{W_t\}_{t\geq 0}$ be two mutually independent Brownian motions, taking values in $\R^d$ and $\R^l$ respectively. Denote by $\mathcal N$  the collection of $P$-null sets of $\mathcal F$. For each $t\in[0,T]$, denote 
	\[\mathcal F_t=\mathcal F_t^W\vee\mathcal F_{t,T}^B\, ,\]
	where $\mathcal F_t^W=\sigma\big\{W_r,0\leq r\leq t\big\}
	\vee \mathcal N$ is the augmented $\sigma$-field generated by $W$ and similarly $\mathcal F_{t,T}^B=\sigma\big\{B_r-B_t,t\leq r\leq T\big\}
	\vee \mathcal N$. We stress that $\mathcal F_t$ is neither increasing nor decreasing in $t$ and hence does not constitute a  filtration. Now let us introduce the following spaces: 
	\begin{align*}
		&L_{\mathcal{G}}^2(\mathbb{R}^n)=\Big\{\xi:\Omega\rightarrow\mathbb{R}^n; \xi\in \mathcal{G}\text{ and }\  \E\left[|\xi|^2\right]<+\infty\Big\} \text{ for any $\sigma$-field}\  \mathcal G\subset \mathcal F;\\
		&L^2_{\mathcal{F}}([s, r];\mathbb{R}^n)=\Big\{\phi:[s,r]\times\Omega\rightarrow \mathbb{R}^n; \phi_t\in \mathcal F_t \text{ for } t\in[s,r] \text{ and } \E\left[\int^r_s |\phi_t|^2dt\right]<+\infty\Big\};\\
		&S^2_{\mathcal{F}}([s, r];\mathbb{R}^n)=\Big\{\phi:[s,r]\times\Omega\rightarrow \mathbb{R}^n; \phi  \text{ is continuous a.s., } \phi_t\in \mathcal F_t \text{ for } t\in[s,r], \\
		& \hspace{9cm}\text{ and } \E\Big[\sup_{s\leq t\leq r} |\phi_t|^2\Big]<+\infty\Big\}.
	\end{align*}

The state process $(y_t,z_t)_{0\le t\le T}$ is governed by the following BDSDE
\begin{align}\label{state-e}
	\left\{
	\begin{aligned}
	-dy_t=&f(t,y_t,y_t,u_t,\mathcal L(y_t,z_t,u_t))dt+ g(t,y_t,z_t,u_t,\mathcal L(y_t,z_t,u_t)) d\overleftarrow B_t\\&-z_t dW_t,\, t\in[0,T],\\
	y_T=&\xi,
	\end{aligned}
	\right.
\end{align}
with $\xi$ a given $\mathcal F_T$-measurable random variable. We aim to  minimize the cost functional given by
	\begin{equation}\label{cost-e}
		\begin{aligned}
			J(u)=\E\left[ \int^T_0 h(t,y_t,z_t,u_t,\mathcal L(y_t,z_t,u_t))dt+\Phi(y_0,\mathcal L(y_0))\right],
		\end{aligned}
	\end{equation}
over the set $\mathcal U:=L^2_{\mathcal F}([0,T];U)$ of admissible controls, where $U$ is a closed convex subset of $\R^k$. 
 The  functions  $f$, $g$ and  $h$ are measurable mappings from $[0,T]\times \R^n\times \R^{n\times l}\times \R^k \times \mathcal P_2(\R^n\times \R^{n\times l}\times \R^k )$ to $\R^n$, $\R^{n\times d}$ and $\R$, respectively.
		
	We stress that the state process $(y, z)$  and the cost function $J(u)$ depend on the joint distribution $\cL(y_t, z_t, u_t)$ of the state and the control processes. 	
	
	To end this subsection, we recall It\^o's formula obtained in \cite[Lemma 1.3]{94pp}, which is a key ingredient in our analysis. 
	\begin{lemma}\label{lem:ito}
	Let $\alpha\in S_{\mathcal{F}}^2([0,T]; \R^n),\beta\in L_{\mathcal{F}}^2([0,T]; \R^n), \gamma \in L_{\mathcal{F}}^2([0,T]; \R^{n\times d}), \theta\in L_{\mathcal{F}}^2([0,T]; \R^{n\times l})$ be such that
	\[\alpha_t =\alpha_0 +\int_0^t \beta_s ds+\int_0^t \gamma_s d\overleftarrow{B}_s +\int_0^t \theta_s dW_s, 0\le t \le T. \]
	Then for $\phi\in C^2(\R^n)$, we have 
\begin{align}
\phi(\alpha_t)=&\phi(\alpha_0)+\int_0^t \big< \partial_x\phi(\alpha_s), \beta_s \big> ds+ \int_0^t \big< \partial_x\phi(\alpha_s), \gamma_s d\overleftarrow{B}_s\big> + \int_0^t \big< \partial_x\phi(\alpha_s), \theta_sdW_s \big>\notag \\
&\qquad\quad  -\frac12\int_0^t \text{tr}\Big[\partial^2_{xx}\phi(\alpha_s)\gamma_s \gamma_s^{\text{T}} \Big]ds +  \frac12\int_0^t \text{tr}\big[\partial^2_{xx}\phi(\alpha_s)\theta_s \theta_s^{\text{T}} \Big]ds. \label{e:ito}
\end{align}
\end{lemma}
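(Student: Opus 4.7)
The plan is to prove Itô's formula by the standard strategy — reduction to simple integrands via approximation, then Taylor expansion on a partition — adapted to handle the forward Itô integral against $W$ and the backward Itô integral against $B$ simultaneously. First, by a localization argument with stopping times that control $\sup_{s\le T}|\alpha_s|$ and $\int_0^T (|\beta_s|^2 + \|\gamma_s\|^2 + \|\theta_s\|^2) ds$, one may assume without loss of generality that $\phi$ and its first two derivatives are bounded on the range of $\alpha$ and that $\beta, \gamma, \theta$ are uniformly bounded. By the density of simple processes in $L^2_\mathcal{F}$ and the $L^2$-isometries for both integrals (standard for the $W$-integral, and proved in \cite{94pp} for the backward $B$-integral), it suffices to establish \eqref{e:ito} for piecewise constant $\beta, \gamma, \theta$ on a partition $\pi = \{0=t_0 < t_1 < \cdots <t_N = t\}$. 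Here $\beta$ and $\theta$ should be sampled at left endpoints (reflecting the forward-adapted convention on $\mathcal F_s^W$), while $\gamma$ must be sampled at right endpoints, as dictated by the definition of the backward integral, so that $\gamma_{t_{i+1}}$ is independent of $\Delta B_i := B_{t_{i+1}} - B_{t_i}$.

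For such simple integrands, setting $\Delta\alpha_i = \beta_{t_i}\Delta_i + \gamma_{t_{i+1}}\Delta B_i + \theta_{t_i}\Delta W_i$ with $\Delta_i = t_{i+1}-t_i$ and $\Delta W_i = W_{t_{i+1}}-W_{t_i}$, the telescoping second-order Taylor expansion gives
\[
\phi(\alpha_t)-\phi(\alpha_0) = \sum_i \la\partial_x\phi(\alpha_{t_i}),\Delta\alpha_i\ra + \tfrac12\sum_i \la\partial^2_{xx}\phi(\alpha_{t_i})\Delta\alpha_i,\Delta\alpha_i\ra + \sum_i R_i,
\]
with $R_i = o(|\Delta\alpha_i|^2)$. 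The drift and $W$ pieces of the first-order term converge to the first and third integrals on line one of \eqref{e:ito}. The $B$-piece $\sum_i \la\partial_x\phi(\alpha_{t_i}),\gamma_{t_{i+1}}\Delta B_i\ra$ differs from the correctly sampled sum $\sum_i \la\partial_x\phi(\alpha_{t_{i+1}}),\gamma_{t_{i+1}}\Delta B_i\ra$ (which converges to $\int_0^t \la\partial_x\phi(\alpha_s),\gamma_s d\overleftarrow B_s\ra$) by a \emph{sampling defect} $-\sum_i \la\partial_x\phi(\alpha_{t_{i+1}})-\partial_x\phi(\alpha_{t_i}),\gamma_{t_{i+1}}\Delta B_i\ra$. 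A further Taylor expansion of $\partial_x\phi$ along $\Delta\alpha_i$, combined with $\E[\Delta B_i(\Delta B_i)^{\T}] = \Delta_i I_d$, shows this defect converges in $L^2$ to $-\int_0^t \tr(\partial^2_{xx}\phi(\alpha_s)\gamma_s\gamma_s^{\T})ds$.

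The second-order term expands into nine bilinear pieces: those involving $\beta$ are $o(1)$, the $\gamma\Delta B$--$\theta\Delta W$ cross piece vanishes in $L^2$ by the mutual independence of $B$ and $W$, and the two diagonal noise pieces give $+\tfrac12\int_0^t \tr(\partial^2_{xx}\phi(\alpha_s)\theta_s\theta_s^{\T})ds$ and $+\tfrac12\int_0^t \tr(\partial^2_{xx}\phi(\alpha_s)\gamma_s\gamma_s^{\T})ds$ by standard arguments. Adding the latter to the sampling defect from the first-order step yields a net coefficient $\tfrac12 - 1 = -\tfrac12$, producing precisely the minus sign in front of the $\gamma\gamma^{\T}$ trace integral in \eqref{e:ito}. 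The remainder $\sum_i R_i$ vanishes by uniform continuity of $\partial^2_{xx}\phi$ together with a second-moment estimate.

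The main obstacle is tracking this sign in the backward Itô correction: it arises not from any negative quadratic variation of the backward integral itself (which is $\int\gamma\gamma^{\T}ds$, positive), but from the sampling-point asymmetry in approximating the first-order term of the backward integral by a Riemann sum evaluated at the Taylor base point $t_i$ rather than at $t_{i+1}$. Once this bookkeeping is settled, the rest is a routine Itô--Taylor approximation following the template in \cite[Lemma 1.3]{94pp}.
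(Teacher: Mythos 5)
The paper does not actually prove Lemma \ref{lem:ito}: it is quoted verbatim from \cite[Lemma 1.3]{94pp} and used as a black box, so there is no internal proof to compare against. Your sketch is the standard Taylor-expansion-on-a-partition proof of that cited result, and the part that matters most --- the sign bookkeeping for the backward correction --- is done correctly: the right-endpoint sampling forced by the backward integral produces a defect term converging to $-\int_0^t\text{tr}[\partial^2_{xx}\phi(\alpha_s)\gamma_s\gamma_s^{\text{T}}]ds$, which combined with the $+\tfrac12$ from the second-order Taylor term yields the net $-\tfrac12$ in \eqref{e:ito}, while the forward $W$-part incurs no defect and keeps its $+\tfrac12$. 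One caution where your phrase ``by standard arguments'' hides a subtlety specific to the doubly stochastic setting: in the quadratic sums $\sum_i\la\partial^2_{xx}\phi(\alpha_{t_i})\gamma_{t_{i+1}}\Delta B_i,\gamma_{t_{i+1}}\Delta B_i\ra$ (and likewise inside the sampling defect), the coefficient $\partial^2_{xx}\phi(\alpha_{t_i})$ is $\mathcal F_{t_i}$-measurable, and since $\mathcal F_{t_i}\supset\mathcal F^B_{t_i,T}$ it is \emph{not} independent of $\Delta B_i$; the usual orthogonality argument for $\sum_i A_i(\Delta B_i\Delta B_i^{\text{T}}-\Delta_i I)$ therefore cannot be applied as stated. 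The fix is routine --- shift the evaluation point to $t_{i+1}$ at the cost of an error controlled by uniform continuity of $\partial^2_{xx}\phi$, or pass to the time-reversed Brownian motion so the backward integral becomes a forward one --- but it should be said explicitly, since it is exactly the point where the two-filtration structure $\mathcal F_t=\mathcal F^W_t\vee\mathcal F^B_{t,T}$ departs from the classical case. With that repair your outline is a correct proof of the lemma.
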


The following product rule is a direct corollary of Lemma \ref{lem:ito}.
 
\begin{lemma}\label{lem:prod-rule}
Consider the processes $y$ and $p$ given by 
	\begin{equation*}
		\begin{cases}
			dy_t= f_t dt + g_t d\overleftarrow{B}_t +z_t dW_t,\vspace{0.2cm}\\
			dp_t=F_tdt+G_tdW_t+q_td\overleftarrow{B}_t,
		\end{cases}
	\end{equation*}
where $f, g, z, F, G, q$ all belong to $L^2_{\mathcal F}([0,T]; \R^m)$ with proper dimension $m$.  We have
\begin{equation}\label{e:prod-rule}
	d\la p_t, y_t\ra= \la dp_t ,  y_t \ra + \la p_t, dy_t\ra +\big( G_tz_t - g_tq_t\big)dt.  
\end{equation}
\end{lemma}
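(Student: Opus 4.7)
The plan is to derive the product rule by a standard polarization argument, i.e., apply the vector-valued It\^o formula of Lemma~\ref{lem:ito} to the scalar bilinear function $\phi(p,y)=\la p,y\ra$ evaluated on the stacked process.

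More precisely, I would form the $\R^{2m}$-valued semimartingale $\alpha_t=(p_t,y_t)^{\text{T}}$, whose drift is $\beta_t=(F_t,f_t)^{\text{T}}$, whose backward integrand is $\gamma_t=(q_t,g_t)^{\text{T}}$ (stacked as a $2m\times d$ matrix), and whose forward integrand is $\theta_t=(G_t,z_t)^{\text{T}}$ (stacked as a $2m\times l$ matrix). Taking $\phi(p,y)=\la p,y\ra$, the gradient is $\partial_x\phi(p,y)=(y,p)^{\text{T}}$ and the Hessian is the off-diagonal block matrix
\[\partial^2_{xx}\phi=\begin{pmatrix}0 & I_m\\ I_m & 0\end{pmatrix}.\]
Plugging into \eqref{e:ito}, the gradient terms instantly recombine into $\la y_t,dp_t\ra+\la p_t,dy_t\ra$ (drift plus both stochastic integrals), so the only thing left to identify is the quadratic correction.

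The key computation is then to observe that for any $m\times k$ matrices $A,B$,
\[
\tr\!\left[\begin{pmatrix}0 & I_m\\ I_m & 0\end{pmatrix}\begin{pmatrix}AA^{\text{T}} & AB^{\text{T}}\\ BA^{\text{T}} & BB^{\text{T}}\end{pmatrix}\right]=\tr(BA^{\text{T}})+\tr(AB^{\text{T}})=2\la A,B\ra.
\]
Applied with $(A,B)=(q,g)$ and $(A,B)=(G,z)$ respectively, this turns the two trace terms in \eqref{e:ito} into $-\la g_t,q_t\ra\,dt$ and $+\la G_t,z_t\ra\,dt$; under the notational convention of the lemma these are written as $-g_tq_t\,dt$ and $G_tz_t\,dt$, giving exactly the claimed correction $\big(G_tz_t-g_tq_t\big)dt$.

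The only point requiring any care (and arguably the main obstacle) is to keep track of the opposite signs of the two It\^o corrections in Lemma~\ref{lem:ito}: the backward integral with respect to $\overleftarrow B$ contributes a \emph{negative} bracket term, while the forward integral against $W$ contributes a \emph{positive} one. This asymmetry is precisely what produces the asymmetric correction $G_tz_t-g_tq_t$ in \eqref{e:prod-rule}, as opposed to the symmetric correction one would get for two forward It\^o integrals. Since $\phi(p,y)=\la p,y\ra$ is smooth with constant Hessian, no further approximation or localization is needed, and the argument concludes directly.
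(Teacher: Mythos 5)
Your proof is correct and follows exactly the route the paper intends: the paper gives no written proof, stating only that the product rule is ``a direct corollary of Lemma \ref{lem:ito}'', and applying that It\^o formula to $\phi(p,y)=\la p,y\ra$ on the stacked process, with the constant off-diagonal Hessian producing $2\la A,B\ra$ in each trace term, is precisely that corollary. Your emphasis on the opposite signs of the backward and forward quadratic corrections is the right point to isolate, since it is what yields the asymmetric term $\big(\la G_t,z_t\ra-\la g_t,q_t\ra\big)dt$.
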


	\subsection{Main assumptions and the variational equation}
	
	  We assume the following conditions for our  control problem \eqref{state-e}-\eqref{cost-e}.
	\begin{enumerate}
		\item [(H1)]  The functions $f(t,0,0,0,\delta_0)$ and $g(t,0,0,0,\delta_0)$ are uniformly bounded, where $\delta_0$ is the Dirac measure at $0$. The functions $f$, $g$ and $h$ are differentiable with respect to $(y,z,u)\in \R^n\times \R^{n\times l}\times \R^k$ for each $t\in[0,T]$ and $\mu\in\mathcal P_2(\R^{n}\times\R^{n\times l}\times \R^{k})$. Moreover,  for  $\rho=y,z,u$,  the partial derivative $\partial_\rho \varphi$ is continuous and uniformly bounded in $(t,y, z, u, \mu)$ for $\varphi=f,g,h$. In particular, we require $\|\partial_z g(t, y,z,u,\mu)\|<\alpha_1\in(0,1)$. 
		
	\medskip
	
		\item[(H2)]The functions $f$, $g$ and  $h$ are L-differentiable with respect to $\mu$.  Moreover, for ${\nu}=\mu_y,\mu_z,\mu_u$, the L-derivative $\partial_\nu \varphi$  is continuous with $L^2$-norm being  uniformly bounded  in $(t, y, z, u, \mu)$ for $\varphi=f, g, h$.  In particular, we require 
		\[\int_{\R^n\times \R^{n\times l}\times \R^k}\big\|\partial_{\mu_z}g(t, y,z,u, \mu)(y',z',u')\big\|^2d\mu(y',z',u')<\alpha_2\in (0, 1-\alpha_1).\]
		
		\medskip
		
		\item[(H3)]The function $\Phi$ is differentiable with respect to $y$ and L-differentiable with respect to $\mu_y$, and moreover $\partial_y \Phi(y,\mu)$ and $\partial_{\mu_y} \Phi(y,\mathcal L(Y))(Y)$ are jointly continuous.  
	\end{enumerate}
\begin{remark}\label{lip}	
	Note that, if $f$, $g$  are continuously differentiable with uniformly bounded partial derivatives as assumed in (H1) and (H2), we can deduce that $f$, $g$ are Lipschitz in $(y,z,u)$ and $\mu$. Precisely, there exists a constant $C$ and $0<\alpha_1,\alpha_2<1$ with $\alpha_1+\alpha_2<1$ such that for all $ y,y'\in\R^n$, $z,z'\in\R^{n\times l}$, $u,u'\in\R^k$, $\mu=\mathcal L(y,z,u),\mu'=\mathcal L(y',z',u')$,
	\begin{align*}
		&\big|f(t,y,z,u,\mu)-f(t,y',z',u',\mu')\big|^2\\&\leq C\Big(|y-y'|^2+\|z-z'\|^2+|u-u'|^2+\E\big[|y-y'|^2+\|z-z'\|^2+|u-u'|^2\big]\Big),
		\end{align*}
	and
	\begin{align*}
			&\big\|g(t,y,z,u,\mu)-g(t,y',z',u',\mu')\big\|^2\\&\leq C\Big(|y-y'|^2+|u-u'|^2+\E\big[|y-y'|^2+|u-u'|^2\big]\Big)+\Big(\alpha_1\|z-z'\|^2+\alpha_2\E\big[\|z-z'\|^2\big]\Big). 
		\end{align*}
\end{remark}

The following result borrowed from \cite{lx22} provides the existence and uniqueness for the solution of  \eqref{state-e}. 
\begin{theorem}\label{exist-unique-state}
	Under the Assumptions (H1) and (H2), for any fixed $u=(u_t)_{0\leq t\leq T}\in \mathcal U$, there exists a unique solution $(y^u,z^u)\in  S^2_\mathcal F([0,T];\R^n)\times L^2_\mathcal F([0,T];\R^{n\times l})$ to~ \eqref{state-e}.
	\end{theorem}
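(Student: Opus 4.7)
The plan is to reduce the mean-field equation to a classical (non mean-field) BDSDE by freezing the law of the state, and then construct the unique solution by a Picard fixed-point argument. Fix $u\in\mathcal U$ throughout. On $\mathcal H:=L^2_{\mathcal F}([0,T];\R^n)\times L^2_{\mathcal F}([0,T];\R^{n\times l})$ equipped with the weighted norm $\|(Y,Z)\|_\beta^2:=\E\int_0^T e^{\beta t}(|Y_t|^2+\|Z_t\|^2)\,dt$ (with $\beta>0$ to be chosen), define $\Gamma:\mathcal H\to\mathcal H$ by $\Gamma(Y,Z)=(y,z)$, where $(y,z)$ is the unique solution of
\begin{equation*}
-dy_t=f(t,y_t,z_t,u_t,\cL(Y_t,Z_t,u_t))\,dt+g(t,y_t,z_t,u_t,\cL(Y_t,Z_t,u_t))\,d\overleftarrow B_t-z_t\,dW_t,\qquad y_T=\xi.
\end{equation*}
With the law frozen, the drivers are Lipschitz in $(y,z)$, satisfy $\|\partial_z g\|\le\alpha_1<1$ by (H1), and have finite $L^2$ value at $(y,z)=(0,0)$ thanks to the boundedness assumption in (H1); the classical Pardoux--Peng theory \cite{94pp} therefore supplies a unique $(y,z)\in S^2_{\mathcal F}([0,T];\R^n)\times L^2_{\mathcal F}([0,T];\R^{n\times l})$, so $\Gamma$ is well defined, and any fixed point of $\Gamma$ is a solution of \eqref{state-e}.

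To prove $\Gamma$ is a contraction in $\|\cdot\|_\beta$, take two inputs $(Y^i,Z^i)$, set $(y^i,z^i)=\Gamma(Y^i,Z^i)$, and denote the differences by $\Delta y,\Delta z,\Delta Y,\Delta Z$. Applying It\^o's formula (Lemma \ref{lem:ito}) to $e^{\beta t}|\Delta y_t|^2$ and taking expectation (which eliminates the $B$- and $W$-martingale parts, while the terminal contribution vanishes because $\Delta y_T=0$) yields
\begin{equation*}
\E[e^{\beta t}|\Delta y_t|^2]+\E\int_t^T e^{\beta s}(\beta|\Delta y_s|^2+\|\Delta z_s\|^2)\,ds=\E\int_t^T e^{\beta s}(2\langle\Delta y_s,\Delta f_s\rangle+\|\Delta g_s\|^2)\,ds,
\end{equation*}
where $\Delta f_s,\Delta g_s$ are the driver increments. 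Combining Young's inequality $2\langle\Delta y,\Delta f\rangle\le\varepsilon^{-1}|\Delta y|^2+\varepsilon|\Delta f|^2$ with the two Lipschitz bounds in Remark \ref{lip} (and noting that the law-dependent expectations are deterministic), the right-hand side is dominated by
\begin{equation*}
\int_t^T e^{\beta s}\Big(C_\varepsilon\E|\Delta y_s|^2+(\alpha_1+C\varepsilon)\E\|\Delta z_s\|^2+C\E|\Delta Y_s|^2+(\alpha_2+C\varepsilon)\E\|\Delta Z_s\|^2\Big)ds.
\end{equation*}

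The main obstacle, and the single place where the structural condition $\alpha_1+\alpha_2<1$ from (H1)-(H2) is genuinely used, is to extract a true contraction rather than just a bound. I would first choose $\varepsilon>0$ so small that $\alpha_1+\alpha_2+2C\varepsilon<1$; rearranging the previous inequality (after integrating in $t$) then produces
\begin{equation*}
(1-\alpha_1-C\varepsilon)\,\|\Delta z\|_\beta^2\le C_\varepsilon\|\Delta y\|_\beta^2+C\|\Delta Y\|_\beta^2+(\alpha_2+C\varepsilon)\|\Delta Z\|_\beta^2,
\end{equation*}
and a companion Gronwall-type bound on $\E[e^{\beta t}|\Delta y_t|^2]$ coming from the same display, with $\beta$ taken large enough so that $\beta-C_\varepsilon>0$ absorbs the $C_\varepsilon\E|\Delta y_s|^2$ term into the $\beta\E|\Delta y_s|^2$ on the left. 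Together these deliver
\begin{equation*}
\|\Gamma(Y^1,Z^1)-\Gamma(Y^2,Z^2)\|_\beta^2\le \kappa\,\|(\Delta Y,\Delta Z)\|_\beta^2
\end{equation*}
with $\kappa<1$, since the critical ratio $(\alpha_2+C\varepsilon)/(1-\alpha_1-C\varepsilon)$ lies strictly below $1$ by the choice of $\varepsilon$. Banach's fixed point theorem then yields the unique fixed point in $\mathcal H$, whose $y$-component is upgraded to $S^2_{\mathcal F}([0,T];\R^n)$ by the standard BDSDE a priori estimate from \cite{94pp}.
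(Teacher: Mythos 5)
The paper offers no proof of this theorem at all --- it is imported wholesale from \cite{lx22} --- so there is no internal argument to compare against; your route (freeze the law, solve the resulting classical BDSDE by the Pardoux--Peng theory, then run a Banach fixed point in a $\beta$-weighted norm) is the natural and standard one, and most of it is sound. The well-definedness of $\Gamma$, the identity
\begin{equation*}
\E\big[e^{\beta t}|\Delta y_t|^2\big]+\E\int_t^T e^{\beta s}\big(\beta|\Delta y_s|^2+\|\Delta z_s\|^2\big)\,ds=\E\int_t^T e^{\beta s}\big(2\langle\Delta y_s,\Delta f_s\rangle+\|\Delta g_s\|^2\big)\,ds,
\end{equation*}
and the system of bounds you extract from it via Remark \ref{lip} are all correct, as is your identification of $\alpha_1+\alpha_2<1$ as the only non-removable smallness condition.

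The gap is in the very last step: the claimed contraction in the norm $\|(Y,Z)\|_\beta^2=\E\int_0^T e^{\beta t}(|Y_t|^2+\|Z_t\|^2)\,dt$ does not follow from your estimates. Writing $A_y=\E\int_0^Te^{\beta t}|\Delta y_t|^2dt$, $A_z$, $B_y$, $B_z$ analogously, what you have proved is
\begin{equation*}
(\beta-C_\varepsilon)A_y+(1-\alpha_1-C\varepsilon)A_z\ \le\ C\,B_y+(\alpha_2+C\varepsilon)\,B_z,
\end{equation*}
and hence $A_y+A_z\le\big(\tfrac{1}{\beta-C_\varepsilon}+\tfrac{1}{1-\alpha_1-C\varepsilon}\big)\big(C\,B_y+(\alpha_2+C\varepsilon)B_z\big)$. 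Your ``critical ratio'' $(\alpha_2+C\varepsilon)/(1-\alpha_1-C\varepsilon)<1$ only controls the $B_z$ coefficient; the $B_y$ coefficient tends to $C/(1-\alpha_1-C\varepsilon)$ as $\beta\to\infty$, where $C$ is the generic (not small) Lipschitz constant of $f,g$ in the $y$-marginal of the law, so it need not be below $1$, and enlarging $\beta$ shrinks only the $A_y$ row, not the $B_y$ entry of the $A_z$ bound. Thus $\Gamma$ is not shown to contract in the unweighted product norm. The repair is standard: your estimates read $(A_y,A_z)\le M_\beta(B_y,B_z)$ componentwise for a rank-one nonnegative matrix $M_\beta$ with spectral radius $C/(\beta-C_\varepsilon)+(\alpha_2+C_\varepsilon\,\varepsilon\cdot 0+C\varepsilon)/(1-\alpha_1-C\varepsilon)$, i.e.\ $C/(\beta-C_\varepsilon)+(\alpha_2+C\varepsilon)/(1-\alpha_1-C\varepsilon)<1$ for large $\beta$; so either replace the norm by $\E\int_0^Te^{\beta t}(\gamma|Y_t|^2+\|Z_t\|^2)\,dt$ with $\gamma$ large (chosen after $\varepsilon$ and before $\beta$), in which $\Gamma$ genuinely contracts, or note that a sufficiently high iterate $\Gamma^N$ contracts in your original norm since $M_\beta^N=\rho^{N-1}M_\beta$ with $\rho<1$. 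With either modification the rest of your argument goes through unchanged.
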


 Let $u\in \mathcal U$ be an optimal control, i.e., $J(u)=\inf\limits_{v\in\mathcal U}J(v)$, and $(y,z)$ be the corresponding state process.  We shall introduce some notations that will be used in the sequel.

  Recalling that $\mathcal U=L^2_{\mathcal F}([0,T]; U)$ with $U$ being a convex set of $\R^k$, we have  $u^\e:=u+\varepsilon v\in \mathcal U$ for  $0\le\e\le 1$ and all $v=\bar u-u$ with $\bar u\in \mathcal U$. Let $(y^\varepsilon, z^\varepsilon)$ denote the solution of \eqref{state-e} with $u=u^\varepsilon$. We shall take the following abbreviated notations
\begin{equation}\label{e:notations}
\begin{aligned}
&\theta_t=(y_t,z_t,u_t,\mathcal L(y_t,z_t,u_t)),\,\,\theta_t^\varepsilon=(y_t^\varepsilon, z_t^\varepsilon,u_t^\varepsilon,\mathcal L(y^\varepsilon_t,z^\varepsilon_t,u^\varepsilon_t)),\\
&y^\lambda_t=y_t+\lambda(y_t^\varepsilon-y_t),\,\,z^\lambda_t=z_t+\lambda(z_t^\varepsilon-z_t),\,\,u^\lambda_t=u_t+\lambda\varepsilon v_t,\\
&\theta^\lambda_t=(y^\lambda_t,z^\lambda_t,u^\lambda_t,\mathcal L(y^\lambda_t,z^\lambda_t,u^\lambda_t)).
\end{aligned}
\end{equation}

 Let $(\widetilde{\Omega},\widetilde{\mathcal F},\widetilde P)$ be a copy of $(\Omega,\mathcal F,P)$. For a random variable $X$ defined on $(\Omega,\mathcal F,P)$, we denote by $\widetilde{X}$ its copy on $\widetilde{\Omega}$.   For any integrable random variable  $\xi$  on  the probability space $(\Omega\times \widetilde{\Omega},\mathcal F\times \widetilde{\mathcal F}, P\otimes \widetilde P)$,  we denote
\begin{equation}\label{e:notations1}
\E\big[\xi(\omega,\widetilde{\omega})\big]=\int_{\Omega}\xi(\omega,\widetilde{\omega})P(d\omega) \text{ and } \widetilde{\E}\big[\xi(\omega,\widetilde{\omega})\big]=\int_{\widetilde{\Omega}}\xi(\omega,\widetilde{\omega})P(d\widetilde{\omega}).
\end{equation}

With the above notations in mind, we introduce the following linear backward doubly stochastic differential equation
\begin{equation}\label{variational-e}
	\left\{
	\begin{aligned}
		-dK_t=&\Big\{\partial_yf(t,\theta_t)K_t+\partial_zf(t,\theta_t)L_t+\partial_uf(t,\theta_t)v_t+\widetilde \E\big[\partial_{\mu_y}f(t,\theta_t)(\widetilde y_t,\widetilde z_t,\widetilde u_t)\widetilde K_t\big]\\&\hspace{1em}+\widetilde \E\big[\partial_{\mu_z}f(t,\theta_t)(\widetilde y_t,\widetilde z_t,\widetilde u_t)\widetilde L_t\big]+\widetilde \E\big[\partial_{\mu_u}f(t,\theta_t)(\widetilde y_t,\widetilde z_t,\widetilde u_t)\widetilde v_t\big]\Big\}dt\\
		&+\Big\{\partial_yg(t,\theta_t)K_t+\partial_zg(t,\theta_t)L_t+\partial_ug(t,\theta_t)v_t+\widetilde \E\big[\partial_{\mu_y}g(t,\theta_t)(\widetilde y_t,\widetilde z_t,\widetilde u_t)\widetilde K_t\big]\\&\quad +\widetilde \E\big[\partial_{\mu_z}g(t,\theta_t)(\widetilde y_t,\widetilde z_t,\widetilde u_t)\widetilde L_t\big]+\widetilde \E\big[\partial_{\mu_u}g(t,\theta_t)(\widetilde y_t,\widetilde z_t,\widetilde u_t)\widetilde v_t\big]\Big\}d\overleftarrow B_t\\&-L_tdW_t,\,\,\,t\in[0,T],\\
		K_T=&0,
		\end{aligned}
	\right.
\end{equation}
to which there exists a unique solution $(K,L)\in  S^2_\mathcal F([0,T];\R^n)\times L^2_\mathcal F([0,T];\R^{n\times l})$ by Theorem~\ref{exist-unique-state}. 

\begin{proposition}\label{estimate}
	Let assumptions (H1)-(H3) hold. Then, we have
	\begin{align*}
		\lim\limits_{\varepsilon\to 0}\E\left[\sup\limits_{0\leq t\leq T}\Big|\frac{y^\varepsilon_t-y_t}{\varepsilon}-K_t\Big|^2\right]=0 ~\text{ and }~ \lim\limits_{\varepsilon\to 0}\E\left[\int_0^T\Big\|\frac{z^\varepsilon_t-z_t}{\varepsilon}-L_t\Big\|^2dt\right]=0.
	\end{align*}
\end{proposition}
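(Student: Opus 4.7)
The natural approach is to study the error processes
\[ Y^\varepsilon_t := \tfrac{y^\varepsilon_t - y_t}{\varepsilon} - K_t, \qquad Z^\varepsilon_t := \tfrac{z^\varepsilon_t - z_t}{\varepsilon} - L_t, \]
and establish $\E[\sup_{t\le T} |Y^\varepsilon_t|^2] + \E\int_0^T \|Z^\varepsilon_t\|^2\, dt \to 0$ as $\varepsilon \downarrow 0$. A preliminary step is the a priori stability estimate $\E\sup_t|y^\varepsilon_t - y_t|^2 + \E\int_0^T \|z^\varepsilon_t - z_t\|^2\, dt = O(\varepsilon^2)$, obtained by applying It\^o's formula (Lemma~\ref{lem:ito}) to $|y^\varepsilon_t - y_t|^2$, invoking the Lipschitz bounds of Remark~\ref{lip} and the identity $u^\varepsilon - u = \varepsilon v$, and using Gronwall with the absorption of the $\|z^\varepsilon - z\|^2$ contribution via the constraint $\alpha_1 + \alpha_2 < 1$.

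Next, by the fundamental theorem of calculus along the path $\lambda \mapsto \theta^\lambda_t$ of \eqref{e:notations}, and the Fr\'echet-derivative characterization of the L-derivative, one may write, for $\varphi \in \{f, g\}$,
\[ \tfrac{\varphi(t, \theta^\varepsilon_t) - \varphi(t, \theta_t)}{\varepsilon} = \int_0^1 \Bigl\{ \partial_y \varphi(t,\theta^\lambda_t) \tfrac{y^\varepsilon_t - y_t}{\varepsilon} + \partial_z \varphi\, \tfrac{z^\varepsilon_t - z_t}{\varepsilon} + \partial_u \varphi\, v_t + \widetilde\E\bigl[\partial_{\mu_y}\varphi(t, \theta^\lambda_t)(\widetilde\theta^\lambda_t)\, \tfrac{\widetilde y^\varepsilon_t - \widetilde y_t}{\varepsilon}\bigr] + \cdots \Bigr\}\, d\lambda. \]
Subtracting the variational equation \eqref{variational-e}, one finds that $(Y^\varepsilon, Z^\varepsilon)$ solves a linear BDSDE with zero terminal condition whose drift and $d\overleftarrow B$-integrand each split into (i) a principal part linear in $(Y^\varepsilon, Z^\varepsilon, \widetilde Y^\varepsilon, \widetilde Z^\varepsilon)$ with coefficients of the form $\int_0^1 \partial_\rho \varphi(t, \theta^\lambda_t)\, d\lambda$, and (ii) a remainder of the form $\int_0^1 [\partial_\rho \varphi(t, \theta^\lambda_t) - \partial_\rho \varphi(t, \theta_t)]\, d\lambda$ multiplied by one of the fixed processes $K, L, v$ and their $\widetilde{\,\cdot\,}$-copies.

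Apply It\^o's formula to $|Y^\varepsilon_t|^2$ and take expectation. Using $Y^\varepsilon_T = 0$ together with the uniform bounds from (H1)--(H2) — in particular $\|\partial_z g\| < \alpha_1$ and $\widetilde\E\|\partial_{\mu_z} g\|^2 < \alpha_2$ with $\alpha_1+\alpha_2 < 1$, which allows absorption of the $\|Z^\varepsilon\|^2$ terms coming from $\|\gamma^\varepsilon\|^2$ on the right-hand side — one arrives at an estimate of the form
\[ \E|Y^\varepsilon_t|^2 + \kappa\, \E\int_t^T \|Z^\varepsilon_s\|^2\, ds \le C \int_t^T \E|Y^\varepsilon_s|^2\, ds + R_\varepsilon \]
for some $\kappa > 0$ and a remainder $R_\varepsilon$ collecting the $L^2$-norms of the type-(ii) terms. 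Gronwall's inequality then yields $\sup_t \E|Y^\varepsilon_t|^2 \le C R_\varepsilon$, and a Burkholder--Davis--Gundy estimate applied to the $dW$- and $d\overleftarrow B$-martingale parts in the BDSDE for $Y^\varepsilon$ upgrades this to $\E\sup_t|Y^\varepsilon_t|^2 + \E\int_0^T \|Z^\varepsilon_t\|^2\, dt \le C R_\varepsilon$.

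The main obstacle is showing $R_\varepsilon \to 0$. For the Euclidean-derivative contributions this is routine: the preliminary stability estimate forces $\theta^\lambda_t \to \theta_t$ in $L^2(\Omega)$ uniformly in $(t, \lambda)$, so continuity and uniform boundedness of $\partial_\rho \varphi$ ($\rho = y, z, u$; $\varphi = f, g$) together with dominated convergence deliver the decay. The genuinely delicate case is the L-derivative remainder, typified by
\[ \E\int_0^T \Big| \widetilde\E\bigl[(\partial_{\mu_z} g(t, \theta^\lambda_t)(\widetilde\theta^\lambda_t) - \partial_{\mu_z} g(t, \theta_t)(\widetilde\theta_t))\, \widetilde L_t\bigr] \Big|^2\, dt, \]
whose treatment requires the joint continuity of the lifted map $(x, Y, Z, U) \mapsto \partial_{\mu_z} g(t, x, \cL(Y, Z, U))(Y, Z, U)$ as an $L^2(\widetilde\Omega)$-valued function (in the sense spelled out at the end of Section~\ref{2}), combined with the uniform $L^2$-bound from (H2), in order to justify dominated convergence on the product space $\Omega \times \widetilde\Omega$.
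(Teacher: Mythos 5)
Your proposal is correct and follows essentially the same route as the paper's proof: introduce the error processes, use the fundamental theorem of calculus along $\lambda\mapsto\theta^\lambda_t$ to split the difference quotient into a linear principal part plus remainders $F^{\lambda,\varepsilon},G^{\lambda,\varepsilon}$, apply It\^o's formula to $|\widehat y^\varepsilon_t|^2$ with the $\alpha_1+\alpha_2<1$ absorption and Gronwall, and kill the remainders by dominated convergence using the joint continuity and uniform boundedness of the (L-)derivatives in (H1)--(H2). In fact you are slightly more complete than the paper, which does not spell out the preliminary $O(\varepsilon^2)$ stability estimate underlying the dominated-convergence step nor the Burkholder--Davis--Gundy argument needed to upgrade $\sup_t\E|\widehat y^\varepsilon_t|^2$ to $\E\sup_t|\widehat y^\varepsilon_t|^2$.
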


\begin{proof}
Denote
\begin{align}\label{e:hat-y-e}
	\hat y_t^\varepsilon=\frac{y^\varepsilon_t-y_t}{\varepsilon}-K_t ~ \text{ and } ~\hat z_t^\varepsilon=\frac{z^\varepsilon_t-z_t}{\varepsilon}-L_t.
\end{align}
Then, by \eqref{state-e} and \eqref{variational-e},  $(\hat y^\varepsilon_t,\hat z^\varepsilon_t)_{0\leq t\leq T}$ solves the following equation,
\begin{equation}\label{e:y-hat}
	\left\{
	\begin{aligned}
		-d\widehat y^\varepsilon_t=&\Big\{\tfrac{1}{\varepsilon}\big[f(t,\theta^\varepsilon_t)-f(t,\theta_t)\big]-\partial_yf(t,\theta_t)K_t-\partial_zf(t,\theta_t)L_t-\partial_uf(t,\theta_t)v_t\\& \quad -\widetilde \E\big[\partial_{\mu_y}f(t,\theta_t)(\widetilde y_t,\widetilde z_t,\widetilde u_t)\widetilde K_t\big]-\widetilde \E\big[\partial_{\mu_z}f(t,\theta_t)(\widetilde y_t,\widetilde z_t,\widetilde u_t)\widetilde L_t\big]\\
		&\quad -\widetilde \E\big[\partial_{\mu_u}f(t,\theta_t)(\widetilde y_t,\widetilde z_t,\widetilde u_t)\widetilde v_t\big]\Big\}dt\\
		&+\Big\{\tfrac{1}{\varepsilon}\big[g(t,\theta^\varepsilon_t)-g(t,\theta_t)\big]-\partial_yg(t,\theta_t)K_t-\partial_zg(t,\theta_t)L_t-\partial_ug(t,\theta_t)v_t\\
		&\qquad -\widetilde \E\big[\partial_{\mu_y}g(t,\theta_t)(\widetilde y_t,\widetilde z_t,\widetilde u_t){\widetilde K}_t\big]-\widetilde \E\big[\partial_{\mu_z}g(t,\theta_t)(\widetilde y_t,\widetilde z_t,\widetilde u_t)\widetilde L_t\big]\\
		& \qquad  -\widetilde \E\big[\partial_{\mu_u}g(t,\theta_t)(\widetilde y_t,\widetilde z_t,\widetilde u_t)\widetilde v_t\big]\Big\}d\overleftarrow B_t-\widehat z^\varepsilon_tdW_t,\,\,\,t\in[0,T],\\
		\hat y^\varepsilon_T=&0.
	\end{aligned}
	\right.
\end{equation}

 Using notations \eqref{e:notations} and \eqref{e:notations1}, some algebraic work shows that \eqref{e:y-hat} can be written as
 \begin{equation}
	\left\{
	\begin{aligned}
		-d\hat y^\varepsilon_t=&\bigg[\int_0^1\Big\{\partial_yf(t,\theta_t^\lambda) {\widehat y_t^\varepsilon}+\widetilde \E\big[\partial_{\mu_y}f(t,\theta_t^\lambda)(\widetilde{ y^\lambda_t},\widetilde{ z^\lambda_t},\widetilde {u^\lambda_t})\widetilde {\widehat y^\varepsilon_t}\big]\\&\hspace{2em}+\partial_zf(t,\theta_t^\lambda) {\widehat z^\varepsilon_t}+\widetilde \E\big[\partial_{\mu_z}f(t,\theta^\lambda_t)(\widetilde{ y^\lambda_t},\widetilde {z^\lambda_t},\widetilde {u^\lambda_t})\widetilde {\widehat z^\varepsilon_t}\big]+F^{\lambda,\varepsilon}_t\Big\}d\lambda \bigg]dt\\
		&+\bigg[\int_0^1\Big\{\partial_yg(t,\theta_t^\lambda){\widehat y_t^\varepsilon}+\widetilde \E\big[\partial_{\mu_y}g(t,\theta_t^\lambda)(\widetilde{ y^\lambda_t},\widetilde {z^\lambda_t},\widetilde {u^\lambda_t})\widetilde {\widehat y^\varepsilon_t}\big]\\&\hspace{3em}+\partial_zg(t,\theta_t^\lambda) {\widehat z^\varepsilon_t}+\widetilde \E\big[\partial_{\mu_z}g(t,\theta^\lambda_t)(\widetilde{ y^\lambda_t},\widetilde {z^\lambda_t},\widetilde {u^\lambda_t})\widetilde {\widehat z^\varepsilon_t}\big]+G^{\lambda,\varepsilon}_t\Big\}d\lambda \bigg]d\overleftarrow B_t\\
		&-\widehat z^\varepsilon_tdW_t,\,\,\,t\in[0,T],\\
		\hat y^\varepsilon_T=&0.
	\end{aligned}
	\right.
\end{equation}
where 
\begin{align*}
	F^{\lambda,\varepsilon}_t:=&\big(\partial_yf(t,\theta_t^\lambda)-\partial_yf(t,\theta_t)\big)K_t+\widetilde\E\Big[\big(\partial_{\mu_y}f(t,\theta_t^\lambda)(\widetilde{ y^\lambda_t},\widetilde {z^\lambda_t},\widetilde {u^\lambda_t})-\partial_{\mu_y}f(t,\theta_t)(\widetilde y_t,\widetilde z_t,\widetilde u_t)\big)\widetilde K_t\Big]\\
	+&\big(\partial_zf(t,\theta_t^\lambda)-\partial_zf(t,\theta_t)\big)L_t+\widetilde\E\Big[\big(\partial_{\mu_z}f(t,\theta_t^\lambda)(\widetilde{ y^\lambda_t},\widetilde {z^\lambda_t},\widetilde {u^\lambda_t})-\partial_{\mu_z}f(t,\theta_t)(\widetilde y_t,\widetilde z_t,\widetilde u_t)\big)\widetilde L_t\Big]\\
	+&\big(\partial_uf(t,\theta_t^\lambda)-\partial_uf(t,\theta_t)\big)v_t+\widetilde \E\Big[\big(\partial_{\mu_u}f(t,\theta^\lambda_t)(\widetilde{ y^\lambda_t},\widetilde {z^\lambda_t},\widetilde {u^\lambda_t})-\partial_{\mu_u}f(t,\theta_t)(\widetilde y_t,\widetilde z_t,\widetilde u_t)\big) \widetilde v_t \Big],
\end{align*}
and $G^{\lambda, \varepsilon}_t$ is of the same form as  $F^{\lambda,\e}_t$ with $f$ replaced by $g$. 

Applying It\^o's formula \eqref{e:ito} to $\left|\widehat y_t^\varepsilon\right|^2$, we have
\begin{align*}
	&\E\big[|\widehat y_t^\varepsilon|^2\big]+\E\Big[\int_t^T\|\widehat z_s^\varepsilon\|^2ds \Big]\\
	=&2\E\int_t^T\Big<\int_0^1\big\{\partial_yf(s,\theta_s^\lambda) {\widehat y_s^\varepsilon}+\widetilde \E\big[\partial_{\mu_y}f(s,\theta_s^\lambda)(\widetilde{ y^\lambda_s},\widetilde {z^\lambda_s},\widetilde {u^\lambda_s})\widetilde {\widehat y^\varepsilon_s}\big]\\&\hspace{5em}+\partial_zf(s,\theta_s^\lambda) {\widehat z^\varepsilon_s}+\widetilde \E\big[\partial_{\mu_z}f(s,\theta^\lambda_s)(\widetilde{ y^\lambda_s},\widetilde {z^\lambda_s},\widetilde {u^\lambda_s})\widetilde {\widehat z^\varepsilon_s}\big]+F^{\lambda, \varepsilon}_s\big\}d\lambda,\widehat y^\varepsilon_s\Big>ds\\&\hspace{0.5em}+\E\int_t^T\Big\|\int_0^1\big\{\partial_yg(s,\theta_s^\lambda){\widehat y_s^\varepsilon}+\widetilde \E\big[\partial_{\mu_y}g(s,\theta_s^\lambda)(\widetilde{ y^\lambda_s},\widetilde {z^\lambda_s},\widetilde {u^\lambda_s})\widetilde {\widehat y^\varepsilon_s}\big]\\&\hspace{6.2em}+\partial_zg(s,\theta_s^\lambda) {\widehat z^\varepsilon_s}+\widetilde \E\big[\partial_{\mu_z}g(s,\theta^\lambda_s)(\widetilde{ y^\lambda_s},\widetilde {z^\lambda_s},\widetilde {u^\lambda_s})\widetilde {\widehat z^\varepsilon_s}\big]+G^{\lambda, \varepsilon}_s\big\}d\lambda\Big\|^2ds.
\end{align*}
The uniform boundedness of the partial derivatives of $f$ and $g$ as assumed in (H1) and (H2) yields   
\begin{align*}
	\E\big[|\widehat y_t^\varepsilon|^2\big]+\E\Big[\int_t^T\|\widehat z_s^\varepsilon\|^2ds \Big]\leq C\E\Big[\int_t^T|\widehat y^\varepsilon_s|^2ds\Big]+\E\int_t^T\int_0^1\big\{\big| F^{\lambda,\varepsilon}_s\big|^2+\big\| G^{\lambda, \varepsilon}_s\big\|^2\big\}d\lambda ds.
\end{align*}
To get the desired result, in view of Gronwall's lemma, it suffices to show 
\begin{equation}\label{estimate-F}
	\lim\limits_{\varepsilon\to0}\E\Big[\int_0^T\int_0^1\big|F_t^{\lambda,\varepsilon}\big|^2d\lambda dt\Big]=0,
\end{equation}
and
\begin{equation}\label{estimate-G}
	\lim\limits_{\varepsilon\to 0}\E\Big[\int_0^T\int_0^1\big\|G_t^{\lambda, \varepsilon}\big\|^2d\lambda dt\Big]=0.
\end{equation}
We shall prove \eqref{estimate-F} below, and \eqref{estimate-G} can be proved in the same way and thus omitted. 
By H\"older's inequality, we have
\begin{align*}
	&\E\Big[\int_0^T\int_0^1\big|F_t^{\lambda,\varepsilon}\big|^2d\lambda dt\Big]\\&\leq C\E\int_0^T\Big\{\int_0^1\big\{|\partial_yf(t,\theta_t^\lambda)-\partial_yf(t,\theta_t)|^2|K_t|^2+\|\partial_zf(t,\theta_t^\lambda)-\partial_zf(t,\theta_t)\|^2\|L_t\|^2\big\}d\lambda\\&\hspace{2.5em}\qquad+\int_0^1\widetilde\E\big[\big|\partial_{\mu_y}f(t,\theta_t^\lambda)(\widetilde{ y^\lambda_t},\widetilde {z^\lambda_t},\widetilde {u^\lambda_t})-\partial_{\mu_y}f(t,\theta_t)(\widetilde y_t,\widetilde z_t,\widetilde u_t)\big|^2\big]\widetilde\E\big[|\widetilde K_t|^2\big]d\lambda\\&\hspace{2.5em}\qquad+\int_0^1\widetilde\E\big[\big\|\partial_{\mu_z}f(t,\theta_t^\lambda)(\widetilde{ y^\lambda_t},\widetilde {z^\lambda_t},\widetilde {u^\lambda_t})-\partial_{\mu_z}f(t,\theta_t)(\widetilde y_t,\widetilde z_t,\widetilde u_t)\big\|^2\big]\widetilde\E\big[\|\widetilde L_t\|^2\big]d\lambda\\&\hspace{2.5em}\qquad +\int_0^1\widetilde\E\big[\big|\partial_{{\mu_u}}f(t,\theta_t^\lambda)(\widetilde{ y^\lambda_t},\widetilde {Z^\lambda_t},\widetilde {u^\lambda_t})-\partial_{\mu_y}f(t,\theta_t)(\widetilde y_t,\widetilde z_t,\widetilde u_t)\big|^2\big]\widetilde\E\big[|\widetilde v_t|^2\big]d\lambda\\&\hspace{2.5em}\qquad+\int_0^1\big|\partial_uf(t,\theta_t^\lambda)-\partial_uf(t,\theta_t)\big|^2|v_t|^2d\lambda	\Big\}dt.
 \end{align*} 
Due to the continuity and uniform boundedness assumed in (H1) and (H2) for the partial derivatives, we can apply the dominated convergence theorem to prove \eqref{estimate-F}. The proof is concluded. 
\end{proof}

The differentiability of  the cost functional $J(\cdot)$ proved in the following result will be used in the derivation of the variational inequality in Section \ref{sec:adjoint-eq}.

\begin{proposition}\label{expan-J}
	Under conditions (H1)-(H3), the cost functional $J(\cdot)$ defined by \eqref{cost-e} is Gateaux differentiable, and the derivative  at $u$ in the direction $v$ is  given by 
		\begin{align}\label{e:J-derivative}
		\frac{d}{d\varepsilon}J(u+\varepsilon v)\Big|_{\varepsilon=0}=&\E\int_0^T\Big\{\big<\partial_yh(t,\theta_t),K_t\big>+\big<\partial_zh(t,\theta_t),L_t\big>+\big<\partial_uh(t,\theta_t),v_t\big>\notag\\&\hspace{2.5em}+\widetilde \E\big[\big<\partial_{\mu_y}h(t,\theta_t)(\widetilde y_t,\widetilde z_t,\widetilde u_t),\widetilde K_t\big>\big]+\widetilde \E\big[\big<\partial_{\mu_z}h(t,\theta_t)(\widetilde y_t,\widetilde z_t,\widetilde u_t),\widetilde L_t\big>\big]\notag\\&\hspace{2.5em}+\widetilde \E\big[\big<\partial_{{\mu_u}}h(t,\theta_t)(\widetilde y_t,\widetilde z_t,\widetilde u_t),\widetilde v_t\big>\big]\Big\}dt\notag\\& +\E\Big[\big<\partial_y\Phi(y_0,\mathcal L(y_0)),K_0\big>+\widetilde \E\big[\big<\partial_{\mu_y}\Phi(y_0,\mathcal L(y_0))(\widetilde y_0),\widetilde K_0\big>\big]\Big].
	\end{align}
\end{proposition}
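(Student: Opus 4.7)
The plan is to split $J(u+\varepsilon v)-J(u)$ into a running-cost increment and an initial-cost increment, apply a one-parameter chain rule along the interpolation $\theta_t^\lambda$ from \eqref{e:notations}, divide by $\varepsilon$, and pass to the limit using Proposition \ref{estimate}. Concretely, I would start from
\begin{equation*}
J(u^\varepsilon)-J(u)=\E\int_0^T\big[h(t,\theta_t^\varepsilon)-h(t,\theta_t)\big]dt+\E\big[\Phi(y_0^\varepsilon,\mathcal L(y_0^\varepsilon))-\Phi(y_0,\mathcal L(y_0))\big],
\end{equation*}
and treat the two pieces in parallel.

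For the running cost, since $\theta_t^0=\theta_t$ and $\theta_t^1=\theta_t^\varepsilon$, I would use the identity $h(t,\theta_t^\varepsilon)-h(t,\theta_t)=\int_0^1 \frac{d}{d\lambda}h(t,\theta_t^\lambda)\,d\lambda$. The Euclidean portion of the chain rule contributes $\langle\partial_yh(t,\theta_t^\lambda),y_t^\varepsilon-y_t\rangle+\langle\partial_zh(t,\theta_t^\lambda),z_t^\varepsilon-z_t\rangle+\varepsilon\langle\partial_uh(t,\theta_t^\lambda),v_t\rangle$. The measure portion, via the L-derivative chain rule applied to the lifted function $\widetilde h$ on a tilde copy of the probability space, produces $\widetilde\E\big[\langle\partial_{\mu_y}h(t,\theta_t^\lambda)(\widetilde{y_t^\lambda},\widetilde{z_t^\lambda},\widetilde{u_t^\lambda}),\widetilde{y_t^\varepsilon-y_t}\rangle\big]$ together with analogous $\partial_{\mu_z}$ and $\varepsilon\partial_{\mu_u}$ terms. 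After taking $\E$ and dividing by $\varepsilon$, the quotients $(y_t^\varepsilon-y_t)/\varepsilon=K_t+\hat y_t^\varepsilon$ and $(z_t^\varepsilon-z_t)/\varepsilon=L_t+\hat z_t^\varepsilon$ from \eqref{e:hat-y-e}, together with their tilde counterparts, appear in the expansion.

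Next I would pass to the limit $\varepsilon\to 0$. By Proposition \ref{estimate}, $\hat y^\varepsilon\to 0$ in $S^2_{\mathcal F}$ and $\hat z^\varepsilon\to 0$ in $L^2_{\mathcal F}$, so their contributions vanish thanks to the uniform boundedness of the Euclidean derivatives and the uniform $L^2$-bound on the L-derivatives from (H1)-(H2). For the leading $K_t$, $L_t$, $v_t$ terms, I would combine the pointwise convergence $\theta_t^\lambda\to\theta_t$ (uniformly in $\lambda\in[0,1]$) as $\varepsilon\to 0$ with the continuity of $\partial_yh,\partial_zh,\partial_uh$ from (H1) and the joint $L^2$-continuity of $\partial_{\mu_y}h,\partial_{\mu_z}h,\partial_{\mu_u}h$ from (H2). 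The uniform bounds then provide the domination needed to invoke the dominated convergence theorem jointly in $(\lambda,t,\omega)$. The $\Phi$-term is handled identically using only (H3) at time $t=0$, noting that $y_0^\varepsilon\to y_0$ in $L^2$ follows from Proposition \ref{estimate}. Summing the two contributions yields \eqref{e:J-derivative}.

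The main obstacle is the rigorous treatment of the measure part of the chain rule: establishing that $\lambda\mapsto h(t,y_t^\lambda,z_t^\lambda,u_t^\lambda,\mathcal L(y_t^\lambda,z_t^\lambda,u_t^\lambda))$ is differentiable with the identified derivative involving the tilde copy, and then interchanging the double expectation $\E\widetilde\E$ with the $\lambda$- and $\varepsilon$-limits. This rests squarely on the joint $L^2$-continuity of the mappings $(x,Y,Z,U)\mapsto\partial_{\mu_\rho}h(t,x,\mathcal L(Y,Z,U))(Y,Z,U)$ highlighted at the end of Section \ref{2}, together with the uniform $L^2$-bound from (H2) that supplies an integrable dominant for the tilde integrands.
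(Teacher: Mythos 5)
Your proposal is correct and follows essentially the same route as the paper: the same split of $J(u^\varepsilon)-J(u)$ into the running-cost and $\Phi$ pieces, the same first-order Taylor/$\lambda$-interpolation expansion along $\theta_t^\lambda$ (your $\int_0^1\frac{d}{d\lambda}$ form is algebraically identical to the paper's main term plus remainder $\rho_t^\varepsilon$), and the same limit passage via Proposition \ref{estimate}, the uniform bounds and joint continuity of the derivatives in (H1)--(H2), and dominated convergence. No gaps.
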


\begin{proof}
	By the definition \eqref{cost-e} of $J$ and the notations \eqref{e:notations}, we have
	\begin{align}
		\frac{J(u^\varepsilon)-J(u)}{\varepsilon}&=\frac{1}{\varepsilon} \Big[\E\int_0^T\big\{h(t,\theta^\varepsilon_t)-h(t,\theta_t)\big\}dt\Big]+\frac{1}{\varepsilon}\E\Big[\Phi(y_0^\varepsilon,\mathcal L(y_0^\varepsilon))-\Phi(y_0,\mathcal L(y_0))\Big]\notag\\&=:I_1+I_2.\label{e:J-J}
	\end{align}
	For the  term $I_1$,  Taylor's first-order expansion yields
	\begin{align*}
		I_1=&\E\int_0^T\Big\{\big<\partial_yh(t,\theta_t),K_t\big>+\big<\partial_zh(t,\theta_t),L_t\big>+\big<\partial_uh(t,\theta_t),v_t\big>\\&\hspace{3em}+\widetilde \E\big[\big<\partial_{\mu_y}h(t,\theta_t)(\widetilde{y_t},\widetilde{z_t},\widetilde{u_t}),\widetilde K_t\big>\big]+\widetilde \E\big[\big<\partial_{\mu_z}h(t,\theta_t)(\widetilde{y_t},\widetilde{z_t},\widetilde{u_t}),\widetilde L_t\big>\big]\\&\hspace{3em}+\widetilde \E\big[\big<\partial_{{\mu_u}}h(t,\theta_t)(\widetilde{y_t},\widetilde{z_t},\widetilde{u_t}),\widetilde v_t\big>\big]\Big\}dt+\E\int_0^T\rho_t^\varepsilon dt,
	\end{align*}
with 
\begin{align*}
	\rho^\varepsilon_t:=&\int_0^1\big\{\big<\partial_yh(t,\theta^\lambda_t),\widehat y_t^\varepsilon\big>+\big<\partial_yh(t,\theta^\lambda_t)-\partial_y h(t,\theta_t),K_t\big>\big\}d\lambda\\&+\int_0^1\big\{\big<\partial_zh(t,\theta^\lambda_t),\widehat z_t^\varepsilon\big>+\big<\partial_zh(t,\theta^\lambda_t)-\partial_z h(t,\theta_t),L_t\big>\big\}d\lambda\\&+\int_0^1\big\{\widetilde \E\big[\big<\partial_{\mu_y}h(t,\theta^\lambda_t)(\widetilde{y^\lambda_t},\widetilde{z^\lambda_t},\widetilde{u^\lambda_t}),\widetilde{\widehat y_t^\varepsilon}\big>\big]+\widetilde \E\big[\big<\partial_{\mu_z}h(t,\theta^\lambda_t)(\widetilde{y^\lambda_t},\widetilde{z^\lambda_t},\widetilde{u^\lambda_t}),\widetilde{\widehat z_t^\varepsilon}\big>\big]\big\}d\lambda
\\	&+\int_0^1\widetilde\E\big[\big<\partial_{\mu_y}h(t,\theta^\lambda_t)(\widetilde{y^\lambda_t},\widetilde{z^\lambda_t},\widetilde{u^\lambda_t})-\partial_{\mu_y}h(t,\theta_t)(\widetilde y_t,\widetilde z_t,\widetilde u_t),\widetilde K_t\big>\big]d\lambda\\
&+\int_0^1\widetilde\E\big[\big<\partial_{\mu_z}h(t,\theta^\lambda_t)(\widetilde{y^\lambda_t},\widetilde{z^\lambda_t},\widetilde{u^\lambda_t})-\partial_{\mu_z}h(t,\theta_t)(\widetilde y_t,\widetilde z_t,\widetilde u_t),\widetilde L_t\big>\big]d\lambda\\
&+\int_0^1\widetilde\E\big[\big<\partial_{{\mu_u}}h(t,\theta^\lambda_t)(\widetilde{y^\lambda_t},\widetilde{z^\lambda_t},\widetilde{u^\lambda_t})-\partial_{{\mu_u}}h(t,\theta_t)(\widetilde y_t,\widetilde z_t,\widetilde u_t),\widetilde v_t\big>\big]d\lambda\\&+\int_0^1\big<\partial_uh(t,\theta_t^\lambda)-\partial_uh(t,\theta_t),v_t\big>d\lambda,
\end{align*}
where we recall that $\hat y_t^\e, \hat z_t^\e$ are given in \eqref{e:hat-y-e}. 

Note that by (H1) and (H2), the partial derivatives of $h$ are jointly continuous and uniformly bounded. Combining this fact with  Proposition \ref{estimate},  we can apply dominated convergence theorem to get  \[\lim\limits_{\varepsilon\to 0}\E\int_0^T\rho_t^\varepsilon dt=0.\]
 The term $I_2$ in \eqref{e:J-J} can be analyzed in a similar way. The proof is concluded. 
\end{proof}

\subsection{On necessity of the condition} \label{sec:adjoint-eq}

In this subsection, we present our main result of stochastic maximum principle, which is a necessary condition for an optimal control. 

Let $H: [0,T]\times \R^n\times \R^{n\times l} \times\R^k\times \mathcal P_2(\R^n\times \R^{n\times l}\times \R^k)\times\R^n\times\R^{n\times d} \to \R$  denote the Hamiltonian given by
\begin{align}\label{Hamiltonian}
	H(t,y,z,u,\mu,p,q):=\big<f(t,y,z,u,\mu),p\big>+\big<g(t,y,z,u,\mu),q\big>+h(t,y,z,u,\mu).
\end{align}
Consider the following adjoint equation
\begin{equation}\label{e:pq}
	\left\{
\begin{aligned}
 dp_t=&\Big[\partial_yH(t,\theta_t,p_t,q_t)+\widetilde \E\big[\partial_{\mu_y}H(t,\widetilde\theta_t,\widetilde p_t,\widetilde q_t)(y_t,z_t,u_t)\big]\Big]dt\\&+\Big[\partial_zH(t,\theta_t,p_t,q_t)+\widetilde \E\big[\partial_{\mu_z}H(t,\widetilde\theta_t,\widetilde p_t,\widetilde q_t)(y_t,z_t,u_t)\big]\Big]dW_t\\&-q_td\overleftarrow B_t, \,\,\,t\in[0,T],\\
 p_0=&\partial_y\Phi(y_0,\mathcal L(y_0))+\widetilde\E\big[\partial_{\mu_y}\Phi(\widetilde y_0,\mathcal L(y_0))(y_0)\big],
	\end{aligned}
	\right.
	\end{equation}
where we have used notations in \eqref{e:notations}.  Recalling the equation \eqref{variational-e} of $(K,L)$,  applying It\^o's formula to $\big<p_t,K_t\big>$ from $0$ to $T$ and taking expectation, we can get 
\begin{align*}
	\E\big[\big<p_0,K_0\big>\big]=&\E\int_0^T\Big\{\big<\partial_u^\text{T}f(t,\theta_t)p_t+\widetilde\E\big[\partial^\text{T}_{\mu_u} f(t,\widetilde \theta_t)(y_t,z_t,u_t)\widetilde p_t\big]\\&\hspace{3.5em}+\partial_u^\text{T}g(t,\theta_t)q_t+\widetilde\E\big[\partial^\text{T}_{\mu_u} g(t,\widetilde \theta_t)(y_t,z_t,u_t)\widetilde q_t\big],v_t\big>\\&\hspace{3em}-\big<\partial_yh(t,\theta_t)+\widetilde\E\big[\partial_{\mu_y}h(t,\widetilde \theta_t)(y_t,z_t,u_t)\big],K_t\big>\\&\hspace{3em}-\big<\partial_zh(t,\theta_t)+\widetilde\E\big[\partial_{\mu_z}h(t,\widetilde \theta_t)(y_t,z_t,u_t)\big],L_t\big>\Big\}dt.
\end{align*}
Note that $\E\big[\big<p_0,K_0\big>\big]$ is the sum of the last two terms on the right-hand side of \eqref{e:J-derivative}.  Plugging this expression into \eqref{e:J-derivative}, we get
\begin{align*}
	&\frac{d}{d\varepsilon}J(u+\varepsilon v)\Big|_{\varepsilon=0}\\=&\E\int_0^T\Big<\partial_u^\text{T}f(t,\theta_t)p_t+\widetilde\E\big[\partial^\text{T}_{\mu_u} f(t,\widetilde \theta_t)(y_t,z_t,u_t)\widetilde p_t\big]+\partial_u^\text{T}g(t,\theta_t)q_t\\
	&\qquad\qquad+\widetilde\E\big[\partial^\text{T}_{\mu_u} g(t,\widetilde \theta_t)(y_t,z_t,u_t)\widetilde q_t\big]+\partial_uh(t,\theta_t)+\widetilde \E\big[\partial_{\mu_u}h(t,\widetilde \theta_t)(y_t,z_t,u_t)\big],v_t\Big>dt.
\end{align*}
Using the Hamiltonian $H$ given by \eqref{Hamiltonian}, we can write
\begin{align}\label{e:J-d}
	\frac{d}{d\varepsilon}J(u+\varepsilon v)\Big|_{\varepsilon=0}=\E\int_0^T\Big<\partial_uH(t,\theta_t,p_t,q_t)+\widetilde\E\big[\partial_{\mu_u} H(t,\widetilde \theta_t,\widetilde p_t,\widetilde q_t)(y_t,z_t,u_t)\big],v_t\Big>dt.
	\end{align}

Now we are ready to derive our main result, the stochastic maximum principle.
\begin{theorem}\label{nmp}
We assume conditions (H1)-(H3)  for the control problem \eqref{state-e}-\eqref{cost-e}. Suppose that $u=(u_t)_{0\leq t\leq T}\in\mathcal U$ is an  optimal control, $(y_t,z_t)_{0\leq t\leq T}$ is  the associated state process, and $(p_t,q_t)_{0\leq t\leq T}$ is the adjoint process satisfying \eqref{e:pq}. Then,  we have, for all $a\in U$,
	\begin{align}\label{variational inequality}
	\Big<\partial_uH(t,\theta_t,p_t,q_t)+\widetilde\E\big[\partial_{\mu_u} H(t,\widetilde \theta_t,\widetilde p_t,\widetilde q_t)(y_t,z_t,u_t)\big],a-u_t\Big>\geq 0, \,\,\,dt\otimes d\mathbb P \text{ a.s.},
	\end{align}
where $H$ is the Hamiltonian defined by \eqref{Hamiltonian}.
\end{theorem}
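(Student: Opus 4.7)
The theorem is reached by combining the derivative formula \eqref{e:J-d} for the cost functional with the optimality of $u$, and then upgrading the resulting integral inequality to a pointwise one by a standard localization.

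\textbf{Step 1: from optimality to a directional inequality.} Since $\mathcal U=L^2_{\mathcal F}([0,T];U)$ with $U$ convex, for every $\bar u\in\mathcal U$ the perturbation $u^\varepsilon:=u+\varepsilon(\bar u-u)$ lies in $\mathcal U$ for all $\varepsilon\in[0,1]$. Writing $v:=\bar u-u$ and using the optimality $J(u^\varepsilon)\ge J(u)$, the Gateaux differentiability established in Proposition~\ref{expan-J} gives
\[
0 \;\le\; \frac{d}{d\varepsilon}J(u+\varepsilon v)\Big|_{\varepsilon=0}.
\]
Plugging in formula \eqref{e:J-d}, which already expresses this derivative via the Hamiltonian by means of the duality relation obtained from It\^o's product rule \eqref{e:prod-rule} applied to $\langle p_t,K_t\rangle$ and the adjoint equation \eqref{e:pq}, we obtain
\[
\E\int_0^T \Big< \Xi_t,\; \bar u_t - u_t \Big>\, dt \;\ge\; 0, \qquad \forall\, \bar u\in\mathcal U,
\]
where for brevity
\[
\Xi_t := \partial_u H(t,\theta_t,p_t,q_t)+\widetilde\E\big[\partial_{\mu_u} H(t,\widetilde\theta_t,\widetilde p_t,\widetilde q_t)(y_t,z_t,u_t)\big].
\]

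\textbf{Step 2: localization.} Fix an arbitrary $a\in U$ and let $A\subset [0,T]\times\Omega$ be a jointly measurable set whose $t$-section $\omega\mapsto \mathbf 1_A(t,\omega)$ is $\mathcal F_t$-measurable for every $t\in[0,T]$. Define
\[
\bar u_t(\omega):= u_t(\omega)+\mathbf 1_A(t,\omega)\,\bigl(a-u_t(\omega)\bigr).
\]
Then $\bar u_t$ is $\mathcal F_t$-measurable for each $t$ (no filtration property is needed), and by convexity of $U$ it takes values in $U$; an easy estimate using $\|u\|_{L^2}<\infty$ and boundedness of $a$ shows $\bar u\in\mathcal U$. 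Applying the inequality from Step~1 to this $\bar u$ yields
\[
\E\int_0^T \Big<\Xi_t,\; a-u_t\Big>\mathbf 1_A(t,\omega)\, dt \;\ge\; 0
\]
for every admissible set $A$.

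\textbf{Step 3: upgrade to a pointwise statement.} Take the specific choice
\[
A:=\Big\{ (t,\omega):\; \big<\Xi_t(\omega),\, a-u_t(\omega)\big>\,<\,0\Big\}.
\]
Since $\Xi_t$ and $u_t$ are $\mathcal F_t$-measurable at each $t$, the set $A$ has the required $\mathcal F_t$-measurable sections. Step~2 forces the integral over $A$ of a strictly negative integrand to be nonnegative, whence $(dt\otimes d\mathbb P)(A)=0$; this is precisely the assertion \eqref{variational inequality} for the fixed $a$. A countable dense subset of $U$ together with the continuity of $\langle \Xi_t(\omega),\cdot\rangle$ removes the dependence on a particular $a$.

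\textbf{Main obstacle.} The heavy lifting has already been done prior to the theorem statement: the delicate points are Proposition~\ref{estimate} (existence of the limit for the variational equation) and the duality computation leading to \eqref{e:J-d}, both requiring careful use of It\^o's formula \eqref{e:ito} in the backward-doubly-stochastic setting. The residual subtlety for the theorem itself is Step~2: because $\{\mathcal F_t\}$ is not a filtration, one must verify that the localized control $\bar u$ still belongs to $\mathcal U$. This works exactly because $\mathcal U$ only demands $\mathcal F_t$-measurability at each fixed $t$ (plus joint measurability for integrability), rather than adaptedness to any filtration, and it is precisely this flexibility that makes the pointwise conclusion accessible.
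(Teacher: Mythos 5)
Your proposal is correct, and Step 1 coincides with the paper's derivation of \eqref{eq3.8} via Proposition \ref{expan-J} and the duality identity \eqref{e:J-d}. Where you diverge is the localization: the paper first restricts the perturbation to a small time window $[t,t+\varepsilon]$ and invokes the Lebesgue differentiation theorem to get an inequality in expectation for a.e.\ $t$, and then localizes in $\omega$ with $\alpha_t=a\mathbf 1_A+u_t\mathbf 1_{A^c}$, $A\in\mathcal F_t$, concluding via the arbitrariness of $A$; you instead perform a single joint localization in $(t,\omega)$ by plugging in $\bar u_t=u_t+\mathbf 1_A(t,\omega)(a-u_t)$ with $A$ chosen as the negativity set of $\langle \Xi_t, a-u_t\rangle$. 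Your route is slightly more direct (it avoids the Lebesgue differentiation step and the conditional-expectation argument), and it is valid here precisely for the reason you flag: admissibility in $\mathcal U=L^2_{\mathcal F}([0,T];U)$ only requires $\mathcal F_t$-measurability of $\bar u_t$ at each fixed $t$ together with square integrability, which your indicator perturbation preserves since $\Xi_t$, being built from $\theta_t,p_t,q_t$, is $\mathcal F_t$-measurable; the integrability of $\langle\Xi_t,a-u_t\rangle$ needed to conclude $(dt\otimes d\mathbb P)(A)=0$ follows from $\Xi,\,u\in L^2(dt\otimes d\mathbb P)$. The paper's two-stage argument buys nothing extra in this setting, so the two proofs are of essentially equal strength; your closing remark about a countable dense subset of $U$ is the right way to obtain a single exceptional null set uniform in $a$.
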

\begin{proof}
Given any admissible control $(\bar u_t)_{0\leq t\leq T}\in\mathcal U$, we denote $v_t=\bar u_t-u_t$. We  use the  perturbation $u^\varepsilon_t=u_t+\varepsilon v_t$. Since $u$ is optimal, i.e., $J(u)$ achieves the minimum, we have
\begin{align*}
	\frac{d}{d\varepsilon}J(u+\varepsilon v)\Big|_{\varepsilon=0}\geq 0.
\end{align*}
This together with \eqref{e:J-d} implies
\begin{align}\label{eq3.8}
	\E\int_0^T\Big<\partial_uH(t,\theta_t,p_t,q_t)+\widetilde\E\big[\partial_{\mu_u} H(t,\widetilde \theta_t,\widetilde p_t,\widetilde q_t)(y_t,z_t,u_t)\big],\bar u_t-u_t\Big>dt\geq 0.
\end{align}
We set an admissible control $(\bar u_t)_{0\leq t\leq T}$ as follows
\begin{align*}
	\bar u_s=\left\{
	\begin{array}{l}
		\alpha_s, \ \ \ s\in[t,t+\varepsilon],\\
		u_s, \ \ \ \text{otherwise},
	\end{array}
	\right.
\end{align*}
where $(\alpha_t)_{0\leq t\leq T}\in \mathcal{U}$. From \eqref{eq3.8}, we have
\begin{align}
	&\frac{1}{\varepsilon}\E\left[\int^{t+\varepsilon}_t\big<\partial_uH(s,\theta_s,p_s,q_s)+\widetilde\E\big[\partial_{\mu_u} H(s,\widetilde \theta_s,\widetilde p_s,\widetilde q_s)(y_s,z_s,u_s)\big],\alpha_s-u_s\big>ds\right]\geq0,
\end{align}
Letting $\varepsilon\rightarrow 0^+$, by Lebesgue differential theorem, we have for almost all $t$, 
\begin{align*}
	&\E\left[\big<\partial_uH(t,\theta_t,p_t,q_t)+\widetilde\E\big[\partial_{\mu_u} H(t,\widetilde \theta_t,\widetilde p_t,\widetilde q_t)(y_t,z_t,u_t)\big],\alpha_t-u_t\big>\right]\geq0.
\end{align*}
For  $A\in \mathcal F_t$,  we set $\alpha_t=a\textbf{1}_A+u_t\textbf{1}_{A^c}$ with $a\in U$. Thus, we have for almost all $t$, 
\begin{align*}
	\E\left[\big<\partial_uH(t,\theta_t,p_t,q_t)+\widetilde\E\big[\partial_{\mu_u} H(t,\widetilde \theta_t,\widetilde p_t,\widetilde q_t)(y_t,z_t,u_t)\big],a-u_t\big>\textbf{1}_A\right]\ge0.
\end{align*}
As   $A\in \mathcal F_t$ is chosen arbitrarily, the definition of conditional expectation leads to,  for almost all $t$, 
	\begin{align*}
	&\E\left[\big<\partial_uH(t,\theta_t,p_t,q_t)+\widetilde\E\big[\partial_{\mu_u} H(t,\widetilde \theta_t,\widetilde p_t,\widetilde q_t)(y_t,z_t,u_t)\big],a-u_t\big>\big|\mathcal F_t\right]\\&=\big<\partial_uH(t,\theta_t,p_t,q_t)+\widetilde\E\big[\partial_{\mu_u} H(t,\widetilde \theta_t,\widetilde p_t,\widetilde q_t)(y_t,z_t,u_t)\big],a-u_t\big>\geq 0, \text{a.s. }
\end{align*}
This proves the desired result.
\end{proof}

\subsection{On sufficiency of the condition}\label{sufficiency}

 In this subsection, we prove a verification theorem which states that under proper conditions,  the maximum principle \eqref{variational inequality}  obtained in Theorem \ref{nmp} does yield an optimal control. 
  
\begin{theorem}\label{smp}
	Assume (H1)-(H3).  We further assume that the Hamiltonian $H$ given in \eqref{Hamiltonian} and $\Phi$ are convex in the sense 
	\begin{align*}
		&H(t,y',z',u',\mu',p,q)-H(t,y,z,u,\mu,p,q)\\&\geq \big<\partial_yH(t,y,z,u,\mu,p,q),y'-y\big>+\widetilde\E\big[\big<\partial_{\mu_y}H(t,y,z,u,\mu,p,q)(\widetilde Y,\widetilde Z,\widetilde U),\widetilde Y'-\widetilde Y\big>\big]\\&+\big<\partial_zH(t,y,z,u,\mu,p,q),z'-z\big>+\widetilde\E\big[\big<\partial_{\mu_z}H(t,y,z,u,\mu,p,q)(\widetilde Y,\widetilde Z,\widetilde U),\widetilde Z'-\widetilde Z\big>\big]\\&+\big<\partial_u H(t,y,z,u,\mu,p,q),u'-u\big>+\widetilde\E\big[\big<\partial_{{\mu_u}}H(t,y,z,u,\mu,p,q)(\widetilde Y,\widetilde Z,\widetilde U),\widetilde U'-\widetilde U\big>\big],
	\end{align*}
	and
	\begin{align*}
		\Phi(y',\mu_y')-\Phi(y,\mu_y)\geq \big<\partial_y\Phi(y,\mu_y),y'-y\big>+\widetilde\E\big[\big<\partial_{\mu_y}\Phi(y,\mu_y)(\widetilde Y),\widetilde Y'-\widetilde Y\big>\big],
	\end{align*}
	for all  $y,y'\in\R^n$, $z,z'\in\R^{n\times l}$, $u,u'\in\R^k$, $\mu,\mu'\in \mathcal P_2(\R^n\times\R^{n\times l}\times \R^k)$ with $\mu=(\mu_y,\mu_z, \mu_u)=\mathcal L(\widetilde Y,\widetilde Z,\widetilde U)$, $\mu'=(\mu_x', \mu_y', \mu_u')=\mathcal L(\widetilde Y',\widetilde Z',\widetilde U')$,  $p\in\R^n$ and $q\in \R^{n\times d}$. 
	
	Let $u=(u_t)_{0\leq t\leq T}\in \mathcal U$ be an admissible control, $(y_t,z_t)_{0\leq t\leq T}$ the state process and $(p_t,q_t)_{0\leq t\leq T}$ the adjoint process. Then, if \eqref{variational inequality} holds, $u$ is an optimal control.
\end{theorem}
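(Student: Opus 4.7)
Let $\bar u\in\mathcal U$ be an arbitrary admissible control with state $(y',z')$, and write $\theta_t'=(y_t',z_t',\bar u_t,\mathcal L(y_t',z_t',\bar u_t))$. The plan is to show $J(\bar u)-J(u)\ge 0$ by combining the two convexity inequalities with a duality identity obtained from the product rule. I would begin by splitting
\[
J(\bar u)-J(u)=\E\!\int_0^T\!\bigl[h(t,\theta_t')-h(t,\theta_t)\bigr]\,dt+\E\bigl[\Phi(y_0',\mathcal L(y_0'))-\Phi(y_0,\mathcal L(y_0))\bigr],
\]
and rewrite each $h$-increment via the Hamiltonian: $h(t,\theta')-h(t,\theta)=\bigl[H(t,\theta',p_t,q_t)-H(t,\theta,p_t,q_t)\bigr]-\langle f(t,\theta')-f(t,\theta),p_t\rangle-\langle g(t,\theta')-g(t,\theta),q_t\rangle$.

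Next I would apply the product rule of Lemma~\ref{lem:prod-rule} to $\langle p_t,y_t'-y_t\rangle$. The process $y_t'-y_t$ satisfies an equation whose drift is $-(f(t,\theta_t')-f(t,\theta_t))$, whose backward It\^o coefficient is $-(g(t,\theta_t')-g(t,\theta_t))$ and whose forward It\^o coefficient is $z_t'-z_t$, while $p_t$ has drift, forward-$W$ and backward-$B$ coefficients read off from \eqref{e:pq}. Since $y_T'=y_T=\xi$, taking expectation and using that all stochastic integrals vanish yields an identity of the form
\[
\E\bigl[\langle p_0,y_0'-y_0\rangle\bigr]=\E\!\int_0^T\!\Bigl\{\langle p_t,f(t,\theta_t')-f(t,\theta_t)\rangle+\langle q_t,g(t,\theta_t')-g(t,\theta_t)\rangle-\langle F_t,y_t'-y_t\rangle-\langle G_t,z_t'-z_t\rangle\Bigr\}dt,
\]
where $F_t$ and $G_t$ are the drift and $dW_t$ coefficients of \eqref{e:pq}. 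The $f$- and $g$-terms here exactly cancel the corresponding pieces from the Hamiltonian identity above.

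I would then invoke the convexity of $\Phi$ together with the Fubini-type swap between $(\Omega,\mathcal F,P)$ and its copy $(\widetilde\Omega,\widetilde{\mathcal F},\widetilde P)$ — namely, $\E\widetilde\E[\langle \partial_{\mu_y}\Phi(y_0,\mathcal L(y_0))(\widetilde y_0),\widetilde y_0'-\widetilde y_0\rangle]=\E[\langle\widetilde\E[\partial_{\mu_y}\Phi(\widetilde y_0,\mathcal L(y_0))(y_0)],y_0'-y_0\rangle]$ — and the initial condition $p_0=\partial_y\Phi(y_0,\mathcal L(y_0))+\widetilde\E[\partial_{\mu_y}\Phi(\widetilde y_0,\mathcal L(y_0))(y_0)]$, to conclude that $\E[\Phi(y_0',\mathcal L(y_0'))-\Phi(y_0,\mathcal L(y_0))]\ge\E[\langle p_0,y_0'-y_0\rangle]$. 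Similarly, the convexity of $H$ bounds $H(t,\theta_t',p_t,q_t)-H(t,\theta_t,p_t,q_t)$ from below by the natural $y,z,u$-linearization plus three $\widetilde\E[\partial_{\mu_\bullet}H\cdots]$ terms. Performing the same $\omega\leftrightarrow\widetilde\omega$ swap on the $\mu_y$ and $\mu_z$ pieces shows that they combine with $\partial_yH$ and $\partial_zH$ to reproduce exactly $F_t$ and $G_t$, so the $y$- and $z$-contributions cancel against those from the duality identity.

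What remains after all cancellations is
\[
J(\bar u)-J(u)\ge\E\!\int_0^T\!\Bigl\langle\partial_uH(t,\theta_t,p_t,q_t)+\widetilde\E\bigl[\partial_{\mu_u}H(t,\widetilde\theta_t,\widetilde p_t,\widetilde q_t)(y_t,z_t,u_t)\bigr],\bar u_t-u_t\Bigr\rangle dt,
\]
which is nonnegative by the variational inequality \eqref{variational inequality} applied pointwise with $a=\bar u_t(\omega)\in U$. Since $\bar u$ is arbitrary, $u$ is optimal. The main technical obstacle is bookkeeping: one must track signs carefully in the product rule because of the backward It\^o term (the $-g_tq_t$ correction in \eqref{e:prod-rule}) and the minus sign in front of $q_t\,d\overleftarrow B_t$ in \eqref{e:pq}, and one must justify the Fubini swap used to convert each $\widetilde\E[\partial_\mu H(t,\theta_t,\ldots)(\widetilde\theta_t)\cdot(\widetilde\cdot{}'-\widetilde\cdot)]$ into a term acting on the unlifted increment; beyond this, the argument is a direct chain of inequalities.
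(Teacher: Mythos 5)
Your proposal is correct and follows essentially the same route as the paper's proof: the same splitting of $J(\bar u)-J(u)$, the same Hamiltonian identity and convexity bounds, the same product-rule duality on $\langle p_t,y_t'-y_t\rangle$ with the Fubini swap between $\omega$ and $\widetilde\omega$, and the same final appeal to \eqref{variational inequality}. The sign bookkeeping you flag (the $-g_tq_t$ correction and the $-q_t\,d\overleftarrow B_t$ term) works out exactly as you describe, yielding the paper's identity \eqref{eq3.13}.
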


\begin{proof}
 Recalling the definition of \eqref{cost-e} of $J$ and the notations \eqref{e:notations}, we have
	\begin{align*}
		J(v)-J(u)=\E\int_0^T\big\{h(t,\theta^v_t)-h(t,\theta_t)\big\}dt+\E\big[\Phi(y_0^v,\mathcal L(y_0^v))-\Phi(y_0,\mathcal L(y_0))\big],
	\end{align*}
	where we use the superscript $v$ to denote the processes associated to the control process $(v_t)_{0\leq t\leq T}\in\mathcal U$.
	It follows directly from the convexity of $H$ and $\Phi$ that
	\begin{align}\label{eq3.11}
		&h(t,\theta^v_t)-h(t,\theta_t)\notag\\
		&=H(t,\theta^v_t,p_t,q_t)-H(t,\theta_t,p_t,q_t)-\big<f(t,\theta_t^v)-f(t,\theta_t),p_t\big>-\big<g(t,\theta_t^v)-g(t,\theta_t),q_t\big>\notag\\
		&\geq  \big<\partial_yH(t,\theta_t,p_t,q_t),y^v_t-y_t\big>+\widetilde\E\big[\big<\partial_{\mu_y}H(t,\theta_t,p_t,q_t)(\widetilde y_t,\widetilde z_t,\widetilde u_t),\widetilde y^v_t-\widetilde y_t\big>\big]\notag\\&\hspace{0.3em}+\big<\partial_zH(t,\theta_t,p_t,q_t),z^v_t-z_t\big>+\widetilde\E\big[\big<\partial_{\mu_z}H(t,\theta_t,p_t,q_t)(\widetilde y_t,\widetilde z_t,\widetilde u_t),\widetilde z^v_t-\widetilde z_t\big>\big]\\&\hspace{0.3em}+\big<\partial_uH(t,\theta_t,p_t,q_t),v_t-u_t\big>+\widetilde\E\big[\big<\partial_{{\mu_u}}H(t,\theta_t,p_t,q_t)(\widetilde y_t,\widetilde z_t,\widetilde u_t),\widetilde v_t-\widetilde u_t\big>\big]\notag\\&\hspace{0.3em}-\big<f(t,\theta_t^v)-f(t,\theta_t),p_t\big>-\big<g(t,\theta_t^v)-g(t,\theta_t),q_t\big>\notag,
	\end{align}
	and 
	\begin{align}\label{eq3.12}
		&\E\big[\Phi(y_0^v,\mathcal L(y_0^v))-\Phi(y_0,\mathcal L(y_0))\big]\notag\\
		&\geq  \E\Big[\big<\partial_y\Phi(y_0,\mathcal L(y_0)),y^v_0-y_0\big>+\widetilde \E\big[\big<\partial_{\mu_y}\Phi(y_0,\mathcal L(y_0))(\widetilde y_0),\widetilde y^v_0-\widetilde y_0\big>\big]\Big]\\
		&=\E\Big[\big<\partial_y\Phi(y_0,\mathcal L(y_0))+\widetilde\E\big[\partial_{\mu_y}\Phi(\widetilde y_0,\mathcal L(y_0))(y_0)\big],y_0^v-y_0\big>\Big]\notag.
	\end{align}
	Applying It\^o's formula to $\big<p_t,y^v_t-y_t\big>$ yields that
	\begin{align}\label{eq3.13}
		&\E\Big[\big<\partial_y\Phi(y_0,\mathcal L(y_0))+\widetilde\E\big[\partial_{\mu_y}\Phi(\widetilde y_0,\mathcal L(y_0))(y_0)\big],y_0^v-y_0\big>\Big]\notag\\&=\E\int_0^T\Big\{\big<f(t,\theta^v_t)-f(t,\theta_t),p_t\big>+\big<g(t,\theta^v_t)-g(t,\theta_t),q_t\big>\notag\\&\hspace{3em}-\big<\partial_yH(t,\theta_t,p_t,q_t)+\widetilde \E\big[\partial_{\mu_y}H(t,\widetilde\theta_t,\widetilde p_t,\widetilde q_t)(y_t,z_t,u_t)\big],y^v_t-y_t\big>\\&\hspace{3em}-\big<\partial_zH(t,\theta_t,p_t,q_t)+\widetilde \E\big[\partial_{\mu_z}H(t,\widetilde\theta_t,\widetilde p_t,\widetilde q_t)(y_t,z_t,u_t)\big],z^v_t-z_t\big>\Big\}dt.\notag
	\end{align}
	Combining \eqref{eq3.11}-\eqref{eq3.13}, and using Fubini's theorem, we have
	\begin{align*}
		J(v)-J(u)\ge \E\int_0^T\Big<\partial_uH(t,\theta_t,p_t,q_t)+\widetilde\E\big[\partial_{{\mu_u}}H(t,\widetilde \theta_t,\widetilde p_t,\widetilde q_t)(y_t,z_t,u_t)\big],v_t-u_t\Big>dt
	\end{align*}
	Thus, if we assume 	\eqref{variational inequality},  we get 
	\begin{align*}
		J(v)-J(u)\geq 0.
	\end{align*}
Noting that $v\in \mathcal U$ is chosen arbitrarily, this implies that $u$ is an optimal control. The proof is concluded. 
\end{proof}

 \setcounter{equation}{0}

\section{Well-posedness of mean-field forward-backward doubly stochastic differential equations}\label{4}

Using the Hamiltonian $H$ given in \eqref{Hamiltonian}, the state equation \eqref{state-e} and the adjoint equation \eqref{e:pq} can be combined in the following system
\begin{equation}\label{Hamiltonian-system}
		\left\{
		\begin{aligned}
			-dy_t=&	\partial_p	H(t, \theta_t, p_t, q_t)
			dt+\partial_q H(t, \theta_t, p_t, q_t)d\overleftarrow B_t-z_tdW_t,\\
			dp_t=&\Big[\partial_yH(t,\theta_t,p_t,q_t)+\widetilde \E\big[\partial_{\mu_y}H(t,\widetilde\theta_t,\widetilde p_t,\widetilde q_t)(y_t,z_t,u_t)\big]\Big]dt\\&+\Big[\partial_zH(t,\theta_t,p_t,q_t)+\widetilde \E\big[\partial_{\mu_z}H(t,\widetilde\theta_t,\widetilde p_t,\widetilde q_t)(y_t,z_t,u_t)\big]\Big]dW_t\\&-q_td\overleftarrow B_t, \,\,\,t\in[0,T],\\
			y_T=&\xi,~ p_0=\partial_y\Phi(y_0,\mathcal L(y_0))+\widetilde\E\big[\partial_{\mu_y}\Phi(\widetilde y_0,\mathcal L(y_0))(y_0)\big],
				\end{aligned}
		\right.
	\end{equation}
where $\theta_t$ is given in \eqref{e:notations}.

 If $u_t$ is  a function of $y_t,z_t,p_t,q_t$ and their joint distribution (see, e.g,  the LQ case in Section \ref{lq}), the above system \eqref{Hamiltonian-system} can be written as a time-symmetric  FBDSDE introduced in Peng and Shi \cite{03ps} of mean-field type,  
	\begin{equation}\label{mf-bdsde}
	\left\{
	\begin{aligned}
		-dy_t=&f(t,y_t,p_t,z_t,q_t,\mathcal L(y_t,p_t,z_t,q_t))dt+g(t,y_t,p_t,z_t,q_t,\mathcal L(y_t,p_t,z_t,q_t))d\overleftarrow B_t-z_tdW_t,\\
		dp_t=&F(t,y_t,p_t,z_t,q_t,\mathcal L(y_t,p_t,z_t,q_t))dt+G(t,y_t,p_t,z_t,q_t,\mathcal L(y_t,p_t,z_t,q_t))dW_t-q_td\overleftarrow B_t,\\
			y_T=&\xi,\,\,	p_0=\Psi(y_0,\mathcal L(y_0)),
	\end{aligned}
	\right.
\end{equation}
where $\xi$ is  an $\mathcal F_T$-measurable random variable, $\Psi:\Omega\times \R^n\times\mathcal P_2(\R^n)\to \R^n$, and
$f, g, F, G$ are functions  from $\Omega\times [0,T]\times \R^n\times \R^n\times \R^{n\times l}\times \R^{n\times d}\times \mathcal P_2( \R^n\times \R^n\times \R^{n\times l}\times \R^{n\times d})$ to $\R^n, \R^{n\times d}, \R^n , \R^{n\times l}$, respectively. 
\begin{definition}\label{def:fbdsde}
	A quadruple of processes $(y,p,z,q) $ is called a solution of \eqref{mf-bdsde} if $(y,p,z,q)\in L^2_\mathcal F([0,T];\R^n\times\R^n\times \R^{n\times l}\times \R^{n\times d})$ and satisfies  \eqref{mf-bdsde}.
\end{definition}

Let  $\mathcal A(t,\zeta,\mu)=(-F,f,-G,g)(t,\zeta, \mu)$ where $\zeta=(y,p,z,q)$ and  $\mu$ stands for a generic element in $\mathcal P_2( \R^n\times \R^n\times \R^{n\times l}\times \R^{n\times d})$. Assume that for each $(\zeta,\mu)\in \R^n\times \R^n\times \R^{n\times l}\times \R^{n\times d}\times \mathcal P_2( \R^n\times \R^n\times \R^{n\times l}\times \R^{n\times d})$,  $\mathcal A(\cdot,\zeta,\mu)\in L^2_\mathcal F([0,T];\R^n\times \R^n\times\R^{n\times l}\times \R^{n\times d}) $ and that for  each $(y,\mu_y)\in \R^n\times \mathcal P_2(\R^n)$, $\Psi(y,\mu_y)\in L^2_{\mathcal F_0}(\R^n)$. For
almost all $(t,\omega)\in [0,T]\times \Omega$,  $ \zeta,\zeta'\in \R^n\times \R^n\times \R^{n\times l}\times \R^{n\times d}$, $\mu,\mu'\in\mathcal P_2( \R^n\times \R^n\times \R^{n\times l}\times \R^{n\times d})$,  $ y,y'\in\R^n$,  and $\mu_y,\mu_y'\in\mathcal P_2(\R^n)$, we assume the following conditions.
\begin{enumerate}
	\item [(A1)] There exists  $k_1>0$ such that 
	\begin{align*}
			|\mathcal A(t,\zeta,\mu)-\mathcal A(t,\zeta',\mu')|&\leq k_1\big(|\zeta-\zeta'|+W_2(\mu,\mu')\big),\\
	|\Psi(y,\mu_y)-\Psi(y',\mu'_y)|&\leq k_1\big(|y-y'|+W_2(\mu_y,\mu'_y)\big).
	\end{align*}
\item [(A2)] There exist  constants $k_2,k_3, k_4\ge0$ with $k_2+k_3>0$  and $k_3+k_4>0$ such that \begin{align*}
	&\E\big[\big<\mathcal A(t,\zeta,\mu)-\mathcal A(t,\zeta',\mu'),\zeta-\zeta'\big>\big]\\&\leq-k_2
	\big(\E[|y-y'|^2+\|z-z'\|^2]\big) -k_3\big(\E[|p-p'|^2+\|q-q'\|^2]\big),
\end{align*}
and  $$\E\big[\big<\Psi(y,\mu_y)-\Psi(y',\mu'_y),y-y'\big>\big]\geq k_4\E[|y-y'|^2].$$
Moreover, we make some further assumptions  if $k_2$ or $k_3$ is zero:  we  assume  
\begin{align*}
		\|g(t,\zeta,\mu)-g(t,\zeta',\mu')\|&\leq k_1(|y-y'|+|p-p'|+\|q-q'\|)+\lambda_1 \|z-z'\|\\&+k_1(\E[|y-y'|+|p-p'|+\|q-q'\|])+\lambda _2 \E[\|z-z'\|],
\end{align*} 
if $k_2=0$, 
and 
\begin{align*}
\|G(t,\zeta,\mu)-G(t,\zeta',\mu')\|&\leq k_1(|y-y'|+\|z-z'\|+|p-p'|)+\lambda_1 \|q-q'\|\\&+k_1(\E[|y-y'|+\|z-z'\|+|p-p'|])+\lambda _2 \E[\|q-q'\|],
\end{align*}
if $k_3=0$, where $\lambda_1,\lambda_2$ are nonnegative constants satisfying $\lambda_1+\lambda_2<1$.
\end{enumerate}

We shall employ the method of continuation introduced in \cite{99pw} (see also \cite{15byz}) to establish the existence of solution of \eqref{mf-bdsde}. Consider a family of  mean-field FBDSDEs parameterized by $\alpha\in[0,1]$,
\begin{equation}\label{parameter-e}
	\left\{
	\begin{aligned}
		-dy_t=&\big[f^\alpha(t,\zeta_t,\mu_t)+f_0(t)\big]dt+\big[g^\alpha(t,\zeta_t,\mu_t)+g_0(t)\big]d\overleftarrow B_t-z_tdW_t,\\
		dp_t=&\big[F^\alpha(t,\zeta_t,\mu_t)+F_0(t)\big]dt+\big[G^\alpha(t,\zeta_t,\mu_t)+G_0(t)\big]dW_t-q_td\overleftarrow B_t,\\
		y_T=&\xi,\,\,p_0=\Psi^\alpha(y_0,\mathcal L(y_0))+\Psi_0,
	\end{aligned}
	\right.
\end{equation}
where $\zeta_t=(y_t,p_t,z_t,q_t)$, $\mu_t=\mathcal L(y_t,p_t,z_t,q_t)$, $(F_0,f_0,G_0,g_0)\in L^2_{\mathcal F}([0,T];\R^{n}\times \R^n\times \R^{n\times l}\times \R^{n\times d})$, $\Psi_0\in L^2_{\mathcal F_0}(\R^n)$ and for any given $\alpha\in [0,1]$,
\begin{align*}
	&f^\alpha(t,\zeta_t,\mu_t)=\alpha f(t,\zeta_t,\mu_t)-(1-\alpha)k_3p_t,\,\,\, g^\alpha(t,\zeta_t,\mu_t)=\alpha g(t,\zeta_t,\mu_t)-(1-\alpha)k_3q_t,\\
	&F^\alpha(t,\zeta_t,\mu_t)=\alpha F(t,\zeta_t,\mu_t)+(1-\alpha)k_2y_t,\,\,\, G^\alpha(t,\zeta_t,\mu_t)=\alpha G(t,\zeta_t,\mu_t)+(1-\alpha)k_2z_t,\\
	&\Psi^\alpha(y_0,\mathcal L(y_0))=\alpha\Psi(y_0,\mathcal L(y_0))+(1-\alpha)y_0.
\end{align*}

When $\alpha=0$,  equation \eqref{parameter-e} is reduced to
\begin{equation}\label{alpha=0}
	\left\{
	\begin{aligned}
		-dy_t=&[-k_3p_t+f_0(t)]dt+[-k_3q_t+g_0(t)]d\overleftarrow B_t-z_tdW_t,\\
		dp_t=&[k_2y_t+F_0(t)]dt+[k_2z_t+G_0(t)]dW_t-q_td\overleftarrow B_t,\\
		y_T=&\xi,\,\,p_0=y_0+\Psi_0.
	\end{aligned}
	\right.
\end{equation}
The  existence and uniqueness of the solution of  equation \eqref{alpha=0} have been obtained in \cite[Proposition 3.6]{10hpw}.   The following lemma is the key ingredient of the continuation method, which says that, if \eqref{parameter-e} has a solution for some $\alpha_0\in [0,1)$,  it also has a solution for $\alpha\in [\alpha_0,\alpha_0+\delta_0]$, where $\delta_0$ is a constant independent of  $\alpha_0$.
\begin{lemma}\label{pertubation}
	Under (A1)-(A2), we assume that there exists a constant $ \alpha_0\in[0,1)$ such that given any $(F_0,f_0,G_0,g_0)\in L^2_{\mathcal F}([0,T];\R^{n}\times \R^n\times \R^{n\times l}\times \R^{n\times d})$ and $\Psi_0\in L^2_{\mathcal F_0}(\R^n)$, $\xi\in L^2_{\mathcal F_T}(\R^n)$, 
	 equation \eqref{parameter-e} with $\alpha=\alpha_0$ has a unique solution. Then,  there exists a constant $\delta_0\in (0,1)$ which only depends on
	$k_1,k_2,k_3,k_4,\lambda_1,\lambda_2$ and $T$, such that for any $\alpha\in [\alpha_0,\alpha_0+\delta_0]$, equation \eqref{parameter-e}  has a unique solution.
\end{lemma}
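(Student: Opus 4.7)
The plan is a Banach fixed point argument in the spirit of Hu-Peng's continuation method, adapted to the present mean-field doubly-stochastic setting. For a prospective step-size $\delta > 0$ and any input $\bar\zeta = (\bar y, \bar p, \bar z, \bar q) \in \mathcal M := L^2_\mathcal F([0,T]; \R^n \times \R^n \times \R^{n\times l} \times \R^{n\times d})$, I would define $\mathcal T_\delta(\bar\zeta) := (y,p,z,q)$ to be the unique solution of \eqref{parameter-e} at $\alpha = \alpha_0$ with the inhomogeneous data $(f_0, g_0, F_0, G_0, \Psi_0)$ replaced by
\[
\tilde f_0(t) := f_0(t) + \delta [f(t, \bar\zeta_t, \bar\mu_t) + k_3 \bar p_t], \quad \tilde F_0(t) := F_0(t) + \delta [F(t, \bar\zeta_t, \bar\mu_t) - k_2 \bar y_t],
\]
and analogously for $\tilde g_0, \tilde G_0, \tilde \Psi_0$. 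This prescription is dictated by the algebraic identities $f^{\alpha_0+\delta} - f^{\alpha_0} = \delta[f + k_3 p]$, $F^{\alpha_0+\delta} - F^{\alpha_0} = \delta[F - k_2 y]$, and the analogues for $g, G, \Psi$, so that a fixed point of $\mathcal T_\delta$ is exactly a solution of \eqref{parameter-e} at $\alpha = \alpha_0 + \delta$. By the standing hypothesis, $\mathcal T_\delta$ is well defined on $\mathcal M$.

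The heart of the proof is a contraction estimate for $\mathcal T_\delta$. Taking two inputs $\bar\zeta^i$ with outputs $\zeta^i$, $i=1,2$, and using hats for differences, I would apply Lemma \ref{lem:prod-rule} to $\langle \hat p_t, \hat y_t \rangle$, integrate over $[0,T]$, and take expectation. A direct computation shows that the cross correction $\langle \hat G_{\mathrm{tot}}, \hat z \rangle - \langle \hat g_{\mathrm{tot}}, \hat q \rangle$ from Lemma \ref{lem:prod-rule} combines with the drift contributions at level $\alpha_0$ into the single pairing $-\langle \mathcal A^{\alpha_0}(\zeta^1_t,\mu^1_t) - \mathcal A^{\alpha_0}(\zeta^2_t,\mu^2_t), \hat\zeta_t \rangle$, leaving a $\delta$-perturbation $R(\hat{\bar\zeta})$ that is linear in the input differences. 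Using $\hat y_T = 0$ yields
\[
\E\langle \hat p_0, \hat y_0 \rangle - \E\int_0^T \langle \mathcal A^{\alpha_0}(\zeta^1_t,\mu^1_t) - \mathcal A^{\alpha_0}(\zeta^2_t,\mu^2_t), \hat\zeta_t \rangle\, dt = \delta\, R(\hat{\bar\zeta}).
\]
Since $\mathcal A^{\alpha_0} = \alpha_0 \mathcal A - (1-\alpha_0)\,\mathrm{diag}(k_2, k_3, k_2, k_3)\,\zeta$ and $\Psi^{\alpha_0} = \alpha_0 \Psi + (1-\alpha_0) y$, the monotonicity constants $k_2, k_3, k_4$ from (A2) are preserved, so the left-hand side is bounded below by $(\alpha_0 k_4 + 1 - \alpha_0)\E|\hat y_0|^2 + k_2 \E\int_0^T(|\hat y|^2 + \|\hat z\|^2)dt + k_3 \E\int_0^T(|\hat p|^2 + \|\hat q\|^2)dt$. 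On the other side, the Lipschitz bound (A1) together with $W_2(\bar\mu^1_t, \bar\mu^2_t)^2 \le \E|\hat{\bar\zeta}_t|^2$ gives $|R(\hat{\bar\zeta})| \le C\|\hat{\bar\zeta}\|_{\mathcal M}\|\hat\zeta\|_{\mathcal M}$ with $C = C(k_1, k_2, k_3, T)$.

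In the non-degenerate regime $k_2, k_3 > 0$ the previous inequalities collapse to $\min(k_2, k_3)\|\hat\zeta\|_{\mathcal M}^2 \le C\delta \|\hat{\bar\zeta}\|_{\mathcal M} \|\hat\zeta\|_{\mathcal M}$, hence $\|\hat\zeta\|_{\mathcal M} \le C'\delta \|\hat{\bar\zeta}\|_{\mathcal M}$ after cancellation. Choosing $\delta_0 \in (0,1)$ with $C'\delta_0 < 1$ makes $\mathcal T_\delta$ a strict contraction for every $\delta \in [0,\delta_0]$, so the Banach fixed point theorem supplies the desired unique solution of \eqref{parameter-e} at each $\alpha \in [\alpha_0, \alpha_0 + \delta_0]$. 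Since $C'$ depends only on $k_1, k_2, k_3, k_4, T$, the step-size $\delta_0$ is independent of $\alpha_0$, as required.

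The main obstacle is the degenerate case where $k_2 = 0$ (so the monotonicity fails to control $\|\hat z\|^2$) or $k_3 = 0$ (no control of $\|\hat q\|^2$). To recover the missing estimate I would complement the above by applying Itô's formula \eqref{e:ito} to $|\hat y_t|^2$ on $[0,T]$ in the first case (resp.\ $|\hat p_t|^2$ in the second): the doubly-stochastic structure contributes $\int_0^T\|\hat z\|^2 dt$ with opposite sign from $\int_0^T \|\hat g^{\alpha_0}_{\mathrm{tot}}\|^2 dt$, and the refined Lipschitz bound on $g$ in (A2) controls the latter by a term of the form $(\lambda_1+\lambda_2)^2\E\|\hat z\|^2$ up to acceptable contributions. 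The smallness $\lambda_1 + \lambda_2 < 1$ then allows this $\|\hat z\|^2$ contribution to be absorbed into the left-hand side via a standard $(1+\epsilon, 1+1/\epsilon)$-splitting, producing the missing a priori bound on $\|\hat z\|_{\mathcal M}$; plugging it back into the product-rule inequality closes the contraction with a $\delta_0$ now also depending on $\lambda_1, \lambda_2$, consistent with the statement.
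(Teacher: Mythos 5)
Your proposal follows essentially the same route as the paper's proof: the same fixed-point map (the paper's $I_{\alpha_0,\delta}$ defined by \eqref{e:fbdsde}), the same product-rule estimate on $\langle \widehat p_t,\widehat y_t\rangle$ exploiting the preserved monotonicity constants, and the same case analysis in which It\^o's formula applied to $|\widehat y_t|^2$ (when $k_2=0$) or $|\widehat p_t|^2$ (when $k_3=0$) together with the refined Lipschitz bounds and $\lambda_1+\lambda_2<1$ supplies the missing component of the contraction estimate. The argument is correct as outlined.
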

\begin{proof} 
	By the assumption,  for each
	$\overline \zeta=(\overline y,\overline p,\overline z,\overline q)\in L^2_\mathcal F([0,T];\R^n\times \R^n\times\R^{n\times l}\times \R^{n\times d})$ with law $\overline\mu\in \mathcal P_2(\R^n\times \R^n\times\R^{n\times l}\times \R^{n\times d})$, there exists a unique quadruple $\zeta=(y,p,z,q)\in L^2_\mathcal F([0,T];\R^n\times \R^n\times\R^{n\times l}\times \R^{n\times d})$ satisfying, for  $\delta>0$,
	\begin{equation}\label{e:fbdsde}
		\left\{
		\begin{aligned}
			-dy_t=&\big[f^{\alpha_0}(t,\zeta_t,\mu_t)+\delta(f(t,\overline \zeta_t,\overline \mu_t)+k_3\overline p_t)+f_0(t)\big]dt\\&+\big[g^{\alpha_0}(t,\zeta_t,\mu_t)+\delta(g(t,\overline \zeta_t,\overline \mu_t)+k_3\overline q_t)+g_0(t)\big]d\overleftarrow B_t-z_tdW_t,\\
			dp_t=&\big[F^{\alpha_0}(t,\zeta_t,\mu_t)+\delta(F(t,\overline \zeta_t,\overline \mu_t)-k_2\overline y_t)+F_0(t)\big]dt\\&+\big[G^{\alpha_0}(t,\zeta_t,\mu_t)+\delta(G(t,\overline \zeta_t,\overline \mu_t)-k_2\overline z_t)+G_0(t)\big]dW_t-q_td\overleftarrow B_t,\\
			y_T=&\xi,\,\,p_0=\Psi^{\alpha_0}(y_0,\mathcal L(y_0))+\delta(\Psi(\overline y_0,\mathcal L(\overline y_0))-\overline y_0)+\Psi_0.
		\end{aligned}
		\right.
	\end{equation}
In order to prove that \eqref{parameter-e}  with $\alpha=\alpha_0+\delta$ has a solution (for sufficiently small $\delta$),  it suffices to show that the mapping 		    $I_{\alpha_0, \delta}(\overline\zeta)=\zeta$ defined via \eqref{e:fbdsde} is a contraction mapping on $L^2_\mathcal F([0,T];\R^n\times \R^n\times\R^{n\times l}\times\R^{n\times d})$. For this purpose, we will obtain some estimations first.

  Denote
	\begin{align*}
		&\widehat \zeta=(\widehat y,	\widehat p,\widehat z,	\widehat q)=(y- y',p-p',z-z',q-q'),\\&\widehat{\overline \zeta}=(\widehat{\overline y},	\widehat{\overline p},\widehat{\overline z},	\widehat{\overline q})=(\overline y-\overline y',\overline p-\overline p',\overline z-\overline z',\overline q-\overline q').
	\end{align*}
	Applying the product rule \eqref{e:prod-rule} to $\big<\widehat y_t,\widehat p_t\big>$ yields
	\begin{align*}
		&\alpha_0\E\big[\big<\widehat y_0,\Psi(y_0,\mathcal L(y_0))-\Psi(y'_0,\mathcal L(y'_0))\big>\big]+(1-\alpha_0)\E [|\widehat y_0|^2]\\&\hspace{2em}+\delta\E\big[\big<\widehat y_0,-\widehat{\overline y}_0+                                              \Psi(\overline y_0,\mathcal L(\overline y_0))-\Psi(\overline y'_0,\mathcal L(\overline y'_0))\big>\big]\\&= \E\int_0^T\big\{\alpha_0\big<\mathcal A(t,\zeta_t,\mu_t)-\mathcal A(t,\zeta'_t,\mu'_t),\widehat \zeta_t\big>+\delta\big<\mathcal A(t,\overline \zeta_t,\overline \mu_t)-\mathcal A(t,\overline \zeta'_t,\overline \mu'_t),{\widehat \zeta}_t\big>\big\}dt\\&\hspace{2em}-(1-\alpha_0)\E\int_0^T\big\{
		k_3|\widehat p_t|^2+	k_3\|\widehat q_t\|^2+k_2|\widehat y_t|^2+k_2	\|\widehat z_t\|^2
		\big\}dt\\&\hspace{2em}+\delta\E\int_0^T\big\{k_3\big<\widehat {\overline p}_t, \widehat p_t\big>+k_3\big<\widehat {\overline q}_t, \widehat q_t\big>+k_2\big<\widehat {\overline y}_t, \widehat y_t\big>+k_2\big<\widehat{\overline z}_t, \widehat z_t\big>\big\}dt.
	\end{align*}
	By (A1)-(A2), we have 	\begin{align*}
		&(\alpha_0k_4+1-\alpha_0)\E\big[|\widehat y_0|^2\big]+\E\int_0^T\big\{k_3\big(|\widehat p_t|^2+
		\|\widehat q_t\|^2\big)+k_2\big(|\widehat y_t|^2+\|\widehat z_t\|^2\big)\big\}dt \\&\leq \delta\Bigg\{\E\int_0^T\big\{k_3\big(\tfrac12|\widehat {\overline p}_t|^2+\tfrac12|\widehat p_t|^2\big)+k_3\big(\tfrac12\|\widehat {\overline q}_t\|^2+\tfrac12\|\widehat q_t\|^2\big)+k_2\big(\tfrac12|\widehat {\overline y}_t|^2+\tfrac12|\widehat y_t|^2\big)\\&\hspace{4em}+k_2\big(\tfrac12\|\widehat {\overline z}_t\|^2+\tfrac12\|\widehat z_t\|^2\big)+|\widehat \zeta_t||\mathcal A(t,\overline \zeta_t,\overline \mu_t)-\mathcal A(t,\overline \zeta'_t,\overline \mu'_t)|\big\}dt	\\&\hspace{2em}+\E\big[\big(\tfrac{1}{2}|\widehat y_0|^2+\tfrac{1}{2}|\widehat{\overline y}_0|^2\big)+|\Psi(\overline y_0,\mathcal L(\overline y_0))-\Psi(\overline y'_0,\mathcal L(\overline y'_0))||\widehat y_0|\big]\Bigg\}\\&\leq\delta\Bigg\{\E\int_0^T\big\{k_3\big(\tfrac12|\widehat {\overline p}_t|^2+\tfrac12|\widehat p_t|^2\big)+k_3\big(\tfrac12\|\widehat {\overline q}_t\|^2+\tfrac12\|\widehat q_t\|^2\big)+k_2\big(\tfrac12|\widehat {\overline y}_t|^2+\tfrac12|\widehat y_t|^2\big)\\&\hspace{4em}+k_2\big(\tfrac12\|\widehat {\overline z}_t\|^2+\tfrac12\|\widehat z_t\|^2\big)+k_1\big(|\widehat\zeta_t|^2+\tfrac 12|\widehat {\overline \zeta}_t|^2+\tfrac12 W_2^2(\overline\mu_t,\overline \mu'_t)\big)\big\}dt\\&\hspace{2em }+\E\big[\big(\tfrac{1}{2}|\widehat y_0|^2+\tfrac{1}{2}|\widehat{\overline y}_0|^2\big)+k_1\big(|\widehat y_0|^2+\tfrac {1}{2}|\widehat{\overline y}_0|^2+\tfrac{1}{2}W_2^2(\mathcal L(\overline y_0),\mathcal L(\overline y'_0))\big) \big]\Bigg\}.
	\end{align*}
	Noting
		\begin{align*}
		W_2^2(\overline\mu_t,\overline \mu'_t)\leq \E\big[|\widehat {\overline \zeta}_t|^2\big],\hspace{1em}\text{and}\hspace{1em}
		W_2^2(\mathcal L(\overline y_0),\mathcal L(\overline y'_0))\leq \E\big[|\widehat {\overline y}_0|^2\big],
	\end{align*}
	we can find a constant $K_1>0$ depending only on $k_1, k_2, k_3$  such that 
	\begin{align*}
		&(\alpha_0k_4+1-\alpha_0)\E\big[|\widehat y_0|^2\big]+\E\int_0^T\big\{k_2\big(|\widehat y_t|^2+\|\widehat z_t\|^2\big)+k_3\big(|\widehat p_t|^2+\|\widehat q_t\|^2\big)\big\}dt\notag\\&\leq \delta K_1\left\{\E\int_0^T \big(|\widehat \zeta_t|^2+|\widehat{\overline \zeta}_t|^2\big)dt+\E\big[|\widehat  y_0|^2+|\widehat {\overline y}_0|^2\big]\right\}.
	\end{align*}
Noting $(\alpha_0k_4+1-\alpha_0)\geq \min\{1,k_4\}$, we have
	\begin{align}\label{e:est-1}
	&\min\{1, k_4\}\E\big[|\widehat y_0|^2\big]+\E\int_0^T\big\{k_2\big(|\widehat y_t|^2+\|\widehat z_t\|^2\big)+k_3\big(|\widehat p_t|^2+\|\widehat q_t\|^2\big)\big\}dt\notag\\&\leq \delta K_1\left\{\E\int_0^T \big(|\widehat \zeta_t|^2+|\widehat{\overline \zeta}_t|^2\big)dt+\E\big[|\widehat  y_0|^2+|\widehat {\overline y}_0|^2\big]\right\}.
\end{align}
Applying It\^o's formula \eqref{e:ito} to $|\widehat y_t|^2$, we have 
\begin{align*}
	&\E\big[|\widehat y_t|^2\big]+\E\int_t^T\|\widehat z_s\|^2ds\\&=2\E\int_t^T\big\{\big<\widehat y_s,\alpha_0(f(s,\zeta_s,\mu_s)-f(s,\zeta'_s,\mu'_s))-(1-\alpha_0)k_3\widehat p_s\big>\\&\hspace{4em}+\big<\widehat y_s,\delta (f(s,\overline\zeta_s,\overline\mu_s)-f(s,\overline\zeta'_s,\overline\mu'_s))+\delta k_3\widehat{\overline p}_s\big>\big\}ds\\&\quad +\E\int_t^T\big\|\alpha_0(g(s,\zeta_s,\mu_s)-g(s,\zeta'_s,\mu'_s))-(1-\alpha_0)k_3\widehat q_s\\&\hspace{4em}+\delta (g(s,\overline\zeta_s,\overline\mu_s)-g(s,\overline\zeta'_s,\overline\mu'_s))+\delta k_3\widehat{\overline q}_s\big\|^2ds.
\end{align*}
By the Lipschitz conditions (A1) and the Gronwall's inequality,  we can find a constant $K_2$ depending on only $k_1, k_2, k_3, \lambda_1, \lambda_2$ such that
\begin{equation}\label{e:est-2}
	\begin{aligned}
		&\sup_{t\in[0,T]} \E\big[|\widehat y_t|^2\big]  \leq K_2\left\{\delta \E\int_0^T|\widehat {\overline\zeta}_t|^2dt+\E\int_0^T\big\{|\widehat p_t|^2+\|\widehat q_t\|^2\big\}dt\right\},\\
		&	\E\int_0^T\big\{|\widehat y_t|^2+\|\widehat z_t\|^2\}dt \leq K_2 (T\vee 1)\left\{\delta  \E\int_0^T|\widehat {\overline\zeta}_t|^2dt+ \E\int_0^T\big\{|\widehat p_t|^2+\|\widehat q_t\|^2\big\}dt\right\}.
		\end{aligned}
\end{equation}
Similarly, the application of  It\^o's formula \eqref{e:ito} to $|\widehat p_t|^2$ yields
\begin{align*}
	&\E\big[|\widehat p_t|^2\big]+\E\int_0^t\|\widehat q_s\|^2ds\\&=\E\big[\big|\alpha\big\{\Psi(y_0, \cL(y_0))-\Psi(y_0', \cL(y_0'))\big\}+(1-\alpha_0)\widehat y_0\\&\hspace{3em}+\delta\big\{ \Psi(\overline y_0, \cL(\overline y_0))-\Psi(\overline{y}'_0, \cL(\overline{y}'_0 ))-\widehat{\overline y}_0\big\}\big|^2\big]\\
	&\hspace{1em} +2\E\int_0^t\big\{\big<\widehat p_s,\alpha_0(F(s,\zeta_s,\mu_s)-F(s,\zeta'_s,\mu'_s))+(1-\alpha_0)k_2\widehat y_s\big>\\&\hspace{4.5em}+\big<\widehat p_s,\delta (F(s,\overline\zeta_s,\overline\mu_s)-F(s,\overline\zeta'_s,\overline\mu'_s))-\delta k_2\widehat{\overline y}_s\big>\big\}ds\\&\hspace{1em} +\E\int_0^t\big\|\alpha_0(G(s,\zeta_s,\mu_s)-G(s,\zeta'_s,\mu'_s))+(1-\alpha_0)k_2\widehat z_s\\&\hspace{4.5em}+\delta (G(s,\overline\zeta_s,\overline\mu_s)-G(s,\overline\zeta'_s,\overline\mu'_s))-\delta k_2\widehat{\overline z}_s\big\|^2ds
\end{align*}
By the Lipschitz conditions (A1) and the Gronwall's inequality, we can deduce that there exists $K_3$ depending only on $k_1, k_2, k_3,\lambda_1, \lambda_2$ such that 
\begin{equation}\label{e:est-3}
\begin{aligned}
&	\E\int_0^T\big\{|\widehat p_t|^2+\|\widehat q_t\|^2\}dt \leq K_3\left\{\delta \E\left(\int_0^T|\widehat {\overline\zeta}_t|^2dt +|\widehat {\overline y}_0|^2 \right)+ \E\left(\int_0^T\big\{|\widehat y_t|^2+\|\widehat z_t\|^2\big\}dt + |\widehat  y_0|^2\right)\right\}.
\end{aligned}
\end{equation}

Now, in order to obtain the contraction of the mapping $I_{\alpha_0,\delta}$ for small $\delta$, it suffices to prove the following estimation from \eqref{e:est-1}-\eqref{e:est-3},
\begin{align}\label{e:contraction}
	\E\big[|\widehat y_0|^2\big]+\E\Big[\int_0^T\big|\widehat \zeta_t\big|^2dt\Big]\leq \delta K \left(\E\big[|\widehat{\overline y}_0|^2\big]+\E\Big[\int_0^T\big|\widehat {\overline\zeta}_t\big|^2dt\Big]\right),
\end{align}
for some positive constant $K$ depending  on $k_1, k_2, k_3, k_4,  \lambda_1, \lambda_2$ and $T$. The proof is split  in three cases according to the positiveness of $k_2, k_3, k_4$. When $k_2,k_3, k_4>0,$ \eqref{e:contraction} is a direct consequence of  \eqref{e:est-1},  when  $k_2>0,k_3=0, k_4>0$, \eqref{e:contraction} follows from \eqref{e:est-1} and \eqref{e:est-3},  and when  $k_2= 0,k_3>0, k_4\ge 0$,  it follows from \eqref{e:est-1} and \eqref{e:est-2}.
	
	The proof is concluded.
	\end{proof}
	
We are ready to present our main result in this section.
 
	\begin{theorem}\label{ex-uni-fbdsde}
		Under the assumptions (A1)-(A2), there exists a unique solution  $(y,p,z,q)\in L^2_\mathcal F([0,T];\R^n\times\R^n\times \R^{n\times l}\times \R^{n\times d})$ to equation \eqref{mf-bdsde}.
	\end{theorem}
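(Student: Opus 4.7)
The plan is to apply the classical method of continuation, exploiting the embedded family \eqref{parameter-e} together with Lemma \ref{pertubation}. The parameter $\alpha$ interpolates between a trivially solvable equation at $\alpha = 0$ and the target equation \eqref{mf-bdsde} at $\alpha = 1$, while the inhomogeneous terms $(F_0, f_0, G_0, g_0)$ and $\Psi_0$ make the scheme flexible enough for the induction to run.

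First I would observe that when $\alpha = 0$, the parameterized equation reduces to \eqref{alpha=0}, which is a linear mean-field FBDSDE whose unique solvability (for arbitrary inhomogeneous data $(F_0,f_0,G_0,g_0) \in L^2_{\mathcal F}([0,T]; \mathbb R^n\times \mathbb R^n\times \mathbb R^{n\times l}\times \mathbb R^{n\times d})$ and $\Psi_0 \in L^2_{\mathcal F_0}(\mathbb R^n)$, $\xi \in L^2_{\mathcal F_T}(\mathbb R^n)$) has been established in \cite[Proposition 3.6]{10hpw}. This provides the base case of the induction.

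Next I would iterate Lemma \ref{pertubation}: since the step size $\delta_0$ depends only on $k_1, k_2, k_3, k_4, \lambda_1, \lambda_2$ and $T$, and not on the particular value of $\alpha_0 \in [0,1)$, we may choose $N = \lceil 1/\delta_0 \rceil$ and set $\alpha_j = j\delta_0 \wedge 1$ for $j = 0, 1, \ldots, N$. Starting from unique solvability at $\alpha_0 = 0$, Lemma \ref{pertubation} propagates unique solvability (with arbitrary inhomogeneous data) from $\alpha_{j}$ to all $\alpha \in [\alpha_j, \alpha_{j+1}]$; after finitely many such steps we reach $\alpha = 1$. Taking $(F_0, f_0, G_0, g_0) \equiv 0$ and $\Psi_0 = 0$, the equation \eqref{parameter-e} at $\alpha = 1$ coincides with \eqref{mf-bdsde}, yielding the desired existence and uniqueness.

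The serious work has already been absorbed into Lemma \ref{pertubation}, so the only delicate point at the level of Theorem \ref{ex-uni-fbdsde} is to verify that the induction hypothesis at each stage is the same quantified statement used in the lemma (namely, unique solvability of \eqref{parameter-e} at $\alpha_0$ for all admissible inhomogeneous data); this is automatic from how the perturbation argument is formulated. I do not expect any additional obstacle here, since the step size $\delta_0$ is uniform and the class of inhomogeneous data is preserved along the continuation.
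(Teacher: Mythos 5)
Your proposal is correct and follows essentially the same route as the paper: the base case at $\alpha=0$ via \cite[Proposition 3.6]{10hpw} and finitely many iterations of Lemma \ref{pertubation} with the uniform step size $\delta_0$, then specializing the inhomogeneous data to zero at $\alpha=1$. The paper's proof additionally records a short standalone uniqueness argument (applying It\^o's formula to $\langle \widehat y_t,\widehat p_t\rangle$ and invoking the monotonicity in (A2)), but this is supplementary to the continuation scheme you describe.
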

\begin{proof}
The existence and uniqueness of the solution follows  from the  well-posedness of  equation \eqref{alpha=0} by \cite[Proposition 3.6]{10hpw} and Lemma \ref{pertubation}. We  also provide a direct proof for the uniqueness as follows.  

Let $\zeta^1=(y^1,p^1,z^1,q^1)$ and $\zeta^2=(y^2,p^2,z^2,q^2)$ be two solutions of \eqref{mf-bdsde}. Denote $\widehat \zeta=(\widehat y,\widehat p,\widehat z,\widehat q)=(y^1-y^2,p^1-p^2,z^1-z^2,q^1-q^2)$. Applying It\^o's formula to $\la \widehat y_t,\widehat p_t\ra$, yields that
	\begin{align*}
		&\E\left[\la\Psi(y_0^1,\mathcal L(y_0^1))-\Psi(y_0^2,\mathcal L(y_0^2)), \widehat y_0\ra\right]\\&=\E\int_0^T\la\mathcal A(t,\zeta^1_t,\mu_t^1)-\mathcal A(t,\zeta_t^2,\mu_t^2),\widehat \zeta_t\ra dt,
	\end{align*}
	where we recall that $\mathcal A=(-F, f, -G, g)$. 	By the monotonicity condition (A2), we have that 
	\begin{align}\label{e:unique}
		 k_4\E\big[|\widehat y_0|^2\big]\leq-k_2\E\int_0^T\big\{|\widehat y_t|^2+\|\widehat z_t\|^2\big\}dt-k_3\E\int_0^T\big\{\|\widehat p_t|^2+\|\widehat q_t\|^2\big\}dt\leq 0.
	\end{align}
If $k_2, k_3>0$, it yields directly   $\zeta^1=\zeta^2$. If  $k_2$ or $k_3$ is 0, say, $k_2=0$ and  $k_3>0$, we have $p_1=p_2$ and $q_1=q_2$ by \eqref{e:unique}, and the uniqueness of $(y, z)$ follows from the classical result of BDSDEs (see \cite{94pp}). 
The proof is completed.
\end{proof}

\begin{remark}
When the mean-field FBDSDE \eqref{mf-bdsde} is reduced to classical FBDSDE (without mean field), Theorem \ref{ex-uni-fbdsde} recovers the existence and uniqueness result  obtained in  \cite[Theorem 2.2]{03ps}.  Our result is also compatible with \cite[Theorem 2.6]{99pw} when \eqref{mf-bdsde} degenerates to classical FBSDE. 
\end{remark}

\section{Examples}\label{example}
In this section,  we apply our results obtained in preceding sections to  some special cases. For simplicity, we assume $n=l=d=k=1$ throughout this section unless otherwise specified.

\subsection{Scalar interaction}  In this subsection, we consider the scalar interaction type control problem,  in which the dependence upon probability measure is through the moments of the probability measure. 

  More precisely, we assume that the coefficients in the state equation \eqref{state-e} take the following form,
\begin{align*}
	&f(t,y,z,u,\mu)=\hat f\big(t,y,z,u,\int\varphi d\mu\big),\,\,\,\,g(t,y,z,u,\mu)=\hat g\big(t,y,z,u,\int \phi d\mu\big),\\&h(t,y,z,u,\mu)=\hat h\big(t,y,z,u,\int\psi d\mu \big),\,\,\,\,\Phi(y,\mu_y)=\hat\Phi\big(y,\int \gamma d\mu_y\big),
\end{align*}
for functions   $\varphi,\phi,\psi:\R^3\to \R$ and $\gamma:\R\to \R$  with at most quadratic growth, and  functions $\hat f$, $\hat g$, $\hat h: [0,T]\times \R^3\times \R\to \R$, $\hat \Phi: \R\times \R \to \R$ satisfying proper regularity conditions.  Here $\int \varphi d\mu:=\int_{\R^3}\varphi(y,z,u)d\mu(y,z,u)= \E[\varphi(Y,Z, U)]$ where $(Y,Z,U)$ is a random vector with $\cL(Y,Z,U)=\mu$. 

Similar to Example \ref{example-h} in Section \ref{2}, the L-derivatives of $f, g, h$ and $\Phi$ can be calculated via $\hat f, \hat g, \hat h$ and $\hat \Phi$ respectively.  For instance, 
\begin{align*}
	\partial_{\mu_y} f\big(t, y,z, u, \cL(Y, Z, U)\big)(Y,Z,U)=\partial_{r}\hat f \big(t,y,z, u, \E[\varphi(Y,Z,U)]\big) \partial_y \varphi (Y, Z,U),
\end{align*}
where $\partial_r \hat f$ denotes the partial derivative with respect to the term $\E[\varphi(Y,Z,U)]$.

Set $\Theta_t=(y_t,z_t,u_t)$. Then, the adjoint equation \eqref{e:pq} can be written as
\begin{equation}
	\left\{
	\begin{aligned}
		dp_t=&\bigg\{\partial_y\hat f\big(t,\Theta_t,\E\big[\varphi(\Theta_t)\big]\big)p_t+\widetilde\E\big[\widetilde p_t\partial_r\hat f\big(t,\widetilde \Theta_t,\E\big[\varphi(\Theta_t)\big]\big)\partial_y \varphi(\Theta_t)\big]\\&\quad+\partial_y\hat g\big(t,\Theta_t,\E\big[\phi(\Theta_t)\big]\big)q_t+\widetilde\E\big[\widetilde q_t\partial_r\hat g\big(t,\widetilde \Theta_t,\E\big[\phi(\Theta_t)\big]\partial_y \phi(\Theta_t)\big)\big]\\&\quad+\partial_y\hat h\big(t,\Theta_t,\E\big[\psi(\Theta_t)\big]\big)+\widetilde\E\big[\partial_{r}\hat h\big(t,\widetilde\Theta_t ,\E\big[\psi(\Theta_t)\big]\big)\partial_y \psi(\Theta_t)\big]\bigg\}dt\\
		&+\bigg\{\partial_z\hat f\big(t,\Theta_t,\E\big[\varphi(\Theta_t)\big]\big)p_t+\widetilde\E\big[\widetilde p_t\partial_r\hat f\big(t,\widetilde \Theta_t,\E\big[\varphi(\Theta_t)\big]\big)\partial_z \varphi(\Theta_t)\big]\\&\quad+\partial_z\hat g\big(t,\Theta_t,\E\big[\phi(\Theta_t)\big]\big)q_t+\widetilde\E\big[\widetilde q_t\partial_r\hat g\big(t,\widetilde \Theta_t,\E\big[\phi(\Theta_t)\big]\partial_z \phi(\Theta_t)\big)\big]\\&\quad+\partial_z\hat h\big(t,\Theta_t,\E\big[\psi(\Theta_t)\big]\big)+\widetilde\E\big[\partial_{r}\hat h\big(t,\widetilde\Theta_t ,\E\big[\psi(\Theta_t)\big]\big)\partial_z \psi(\Theta_t)\big]\bigg\}dW_t\\&-q_td\overleftarrow B_t,\,\,\,t\in[0,T],\\
		p_0=&\partial_y\hat\Phi\big(y_0,\E[\gamma(y_0)]\big)+\widetilde\E\big[\partial_{r}\hat\Phi\big(\widetilde y_0,\E[\gamma(y_0)]\big)\partial\gamma(y_0)\big].
	\end{aligned}
	\right.
\end{equation}
The stochastic maximum principle \eqref{variational inequality} obtained in Theorem \ref{nmp} becomes
\begin{align*}
	&\bigg\{\partial_u\hat f\big(t,\Theta_t,\E\big[\varphi(\Theta_t)\big]\big)p_t+\partial_u\hat g\big(t,\Theta_t,\E\big[\phi(\Theta_t)\big]\big)q_t+\partial_u\hat h\big(t,\Theta_t,\E\big[\psi(\Theta_t)\big]\big)\\&\quad +\widetilde\E\big[\widetilde p_t\partial_r\hat f\big(t,\widetilde \Theta_t,\E\big[\varphi(\Theta_t)\big]\big)\partial_u\varphi(\Theta_t)\big]+\widetilde\E\big[\widetilde q_t\partial_r\hat g\big(t,\widetilde \Theta_t,\E\big[\phi(\Theta_t)\big]\big)\partial_u\phi(\Theta_t)\big]\\&\quad +\widetilde\E\big[\partial_r\hat h\big(t,\widetilde \Theta_t,\E\big[\psi(\Theta_t)\big]\big)\partial_u\psi(\Theta_t)\big]\big(a-u_t\big)\geq 0,\,\,\, \text{ for all }  a\in U .
\end{align*}
\subsection{First order interaction}
In this example, we consider the case of first order interaction where the dependence of the coefficients on the probability measure is linear in the following sense 
\begin{align*}
	&f(t,y,z,u,\mu)=\int_{\R^3}\hat f(t,y,z,u,y',z',u')d\mu(y',z',u')=\widetilde\E[\hat f(t, y, z, u, \widetilde Y, \widetilde Z,\widetilde U)],\\
	&g(t,y,z,u,\mu)=\int_{\R^3}\hat g(t,y,z,u,y',z',u')d\mu(y',z',u')=\widetilde\E[\hat g(t, y, z, u, \widetilde Y, \widetilde Z,\widetilde  U)],\\
	&h(t,y,z,u,\mu)=\int_{\R^3}\hat h(t,y,z,u,y',z',u')d\mu(y',z',u')=\widetilde\E[\hat h(t, y, z, u,\widetilde Y,\widetilde Z,\widetilde U)],\\
	&\Phi(y,\mu_y )=\int_{\R}\hat\Phi(y,y')d\mu_y(y')=\widetilde\E[\hat \Phi(y, \widetilde Y)],
\end{align*}
for some functions $\hat f$, $\hat g$, $\hat h$ defined on $\R^3\times \R^3$ and $\hat \Phi$ defined $\R\times \R$ with values on $\R$, where $(\widetilde Y,\widetilde Z,\widetilde U)$ is a random vector with the law $\mu$. 

The state equation \eqref{state-e} with first order interaction corresponds to a type of mean-field BDSDE which may arise naturally in economics, finance and game theorem, etc. We refer to \cite{09bdlp} for a study of mean-field BSDEs with first order interaction via a limit approach.

 Actually, when considering the $N$-players game where each individual state is governed by 
\begin{align*}
	dX_t^i=\frac{1}{N}\sum\limits_{j=1}^N \hat b(t,X_t^i,X_t^j,u_t^i)dt+\sigma dW_t,\,\,i=1,\cdots, N,
\end{align*}
$u_t^i$ denotes the strategy of $i$-th player. The equation can be rewritten as
\begin{align*}
	dX_t^i&= b(t,X_t^i,\overline \mu_t^N,u_t^i)dt+\sigma dW_t,
\end{align*}
where $\overline\mu_t^N=\frac{1}{N}\sum\limits_{j=1}^N\delta_{X_t^j}$ and 
$
	b(t,x,\mu,u)=\int_{\R^n}\hat b(t,x,x',u)d\mu(x').
$
Interaction given by functions of the form is called first order or linear. 

Now, we come back to our control problem in the case of first order interaction. From Example~ \ref{example-h}, $f$, $g$, $h$ are linear with respect to $\mu$, and $\Phi$ is linear in $\mu_y$. For $\Phi$, $\partial_{\mu_y}\Phi(y,\mu_y)(y')=\partial_{y'}\hat \Phi(y,y')$ and similarly, $\partial_{\mu_y}f(t,y,z,u,\mu)(y',z',u')=\partial_{y'}\hat f(t,y,z,u,y',z',u')$. The adjoint equation is
\begin{equation}
	\left\{
	\begin{aligned}
		dp_t=&\Big\{\widetilde\E\big[\partial_y\hat f(t,\Theta_t,\widetilde y_t,\widetilde z_t,\widetilde u_t)p_t+\partial_y\hat g(t,\Theta_t,\widetilde y_t,\widetilde z_t,\widetilde u_t)q_t+\partial_y\hat h(t,\Theta_t,\widetilde y_t,\widetilde z_t,\widetilde u_t)\big]\\&\hspace{0.5em}+\widetilde\E\big[\partial_{ y'}\hat f(t,\widetilde\Theta_t, y_t, z_t, u_t)\widetilde p_t+\partial_{y'}\hat g(t,\widetilde\Theta_t, y_t, z_t, u_t)\widetilde q_t+\partial_{y'}\hat h(t,\widetilde\Theta_t, y_t, z_t, u_t)\big]\Big\}dt\\&+\Big\{\widetilde \E\big[\partial_z\hat f(t,\Theta_t,\widetilde y_t,\widetilde z_t,\widetilde u_t)p_t+\partial_z\hat g(t,\Theta_t,\widetilde y_t,\widetilde z_t,\widetilde u_t)q_t+\partial_z\hat h(t,\Theta_t,\widetilde y_t,\widetilde z_t,\widetilde u_t)\big]\\&\hspace{1em}+\widetilde \E\big[\partial_{z'}\hat f(t,\widetilde\Theta_t, y_t, z_t, u_t)\widetilde p_t+\partial_{z'}\hat g(t,\widetilde\Theta_t, y_t, z_t, u_t)\widetilde q_t+\partial_{z'}\hat h(t,\widetilde\Theta_t, y_t, z_t, u_t)\big]\Big\}dW_t\\&-q_td\overleftarrow B_t,\,\,\,t\in[0,T],\\
		p_0=&\widetilde \E\big[\partial_y\hat \Phi(y_0,\widetilde y_0)+\partial_{y'}\hat \Phi(\widetilde y_0, y_0)\big].
	\end{aligned}
	\right.
\end{equation}
Similarly,  it follows from applying stochastic maximum principle in Theorem \ref{nmp}  that for $\forall a\in U$, 
\begin{align*}
	&\bigg\{\widetilde\E\Big[\partial_u\hat f(t,\Theta_t,\widetilde y_t,\widetilde z_t,\widetilde u_t)p_t+\partial_u\hat g(t,\Theta_t,\widetilde y_t,\widetilde z_t,\widetilde u_t)q_t+\partial_u\hat h(t,\Theta_t,\widetilde y_t,\widetilde z_t,\widetilde u_t)\\&\hspace{1em}+\partial_{ u'}\hat f(t,\widetilde\Theta_t, y_t, z_t, u_t)\widetilde p_t+\partial_{u'}\hat g(t,\widetilde \Theta_t, y_t, z_t, u_t)\widetilde q_t+\partial_{ u'}\hat h(t,\widetilde\Theta_t, y_t, z_t, u_t)\Big]\bigg\}\big(a-u_t\big)\geq 0.
\end{align*}
\subsection{LQ problem}\label{lq}
In this subsection, we will apply  the stochastic maximum principle derived in Section \ref{3} to  a kind of mean-field stochastic linear quadratic control problem with scalar interaction.  In such an LQ model, the drift  and the volatility in \eqref{state-e} are of the form
\begin{align*}
	&f(t,y_t,z_t,u_t,\mathcal L(y_t,z_t,u_t))=f_1y_t+f_2z_t+f_3u_t+\overline f_1\E[y_t]+\overline f_2\E[z_t]+\overline f_3\E[u_t],\\
	&g(t,y_t,z_t,u_t,\mathcal L(y_t,z_t,u_t))=g_1y_t+g_2z_t+g_3u_t+\overline g_1\E[y_t]+\overline g_2\E[z_t]+\overline g_3\E[u_t],
\end{align*}
and the cost functional is assumed to be
\begin{align*}
	J(u)=&\frac 12\E\bigg[\int_0^T\big\{h_1y_t^2+h_2z^2_t+h_3u^2_t+\overline h_1(\E[y_t])^2+\overline h_2(\E[z_t])^2+\overline h_3(\E[u_t])^2\big\}dt\\&\hspace{1em}+\Phi y_0^2+\overline \Phi (\E[y_0])^2\bigg],
\end{align*}
where $f_i, \overline f_i$, $g_i, \overline g_i$, $h_i, \overline h_i$ for $i=1,2,3$ and $\Phi, \overline \Phi$ are given constants satisfying $h_1, h_2,\overline h_1, \overline h_2,\Phi, \overline \Phi\geq 0$ and $h_3,\overline h_3>0$, $|g_2|+|\overline g_2|<1$.  In this setting, 
the Hamiltonian $H$ given by \eqref{Hamiltonian} is 
\begin{align}\label{e:H-LQ}
	H(t,y,z,u,\mu,p,q)=&\big\{f_1y+f_2z+f_3u+\overline f_1\E[y]+\overline f_2\E[z]+\overline f_3\E[u]\big\}p\notag\\+&\big\{g_1y+g_2z+g_3u+\overline g_1\E[y]+\overline g_2\E[z]+\overline g_3\E[u]\big\}q\notag\\+&\tfrac 12\big\{h_1y^2+h_2z^2+h_3u^2+\overline h_1(\E[y])^2+\overline h_2(\E[z])^2+\overline h_3(\E[u])^2\big\},
\end{align}
and the adjoint equation \eqref{e:pq}  is 
\begin{equation}
	\left\{
	\begin{aligned}
		dp_t=&\big\{f_1p_t+\overline f_1\E[p_t]+g_1q_t+\overline g_1\E[q_t]+h_1y_t+\overline h_1\E[y_t]\big\}dt\\+&\big\{f_2p_t+\overline f_2\E[p_t]+g_2q_t+\overline g_2\E[q_t]+h_2z_t+\overline h_2\E[z_t]\big\}dW_t\\-&q_td\overleftarrow B_t,\,\,\,t\in[0,T],\\
		p_0=&\Phi y_0+\overline \Phi\E[y_0].
	\end{aligned}
	\right.
\end{equation}

If we further assume that the control domain $U$ is the whole space $\R$,  the stochastic maximum principle \eqref{variational inequality} yields
\begin{align}\label{lq-mp}
	f_3p_t+\overline f_3\E[p_t]	+g_3q_t+\overline g_3\E[q_t]+h_3u_t+\overline h_3\E [u_t]=0.
\end{align}
Taking expectation, we have
\begin{align}\label{Eu}
	\E[u_t]=-\frac{1}{h_3+\overline h_3}\Big\{(f_3+\overline f_3)\E[p_t]+(g_3+\overline g_3)\E[q_t]\Big\}.
\end{align}
Plugging this into 	\eqref{lq-mp}, we obtain that
\begin{align}\label{u}
	u_t=&-\frac{1}{h_3}\Big\{f_3p_t+\tfrac{1}{h_3+\overline h_3}\big(h_3\overline f_3-\overline h_3f_3\big)\E[p_t] +g_3q_t+\tfrac{1}{h_3+\overline h_3}\big(h_3\overline g_3-\overline h_3g_3\big)\E[q_t]\Big\}.
\end{align}
If the following stochastic Hamiltonian system   
\begin{equation}\label{fbdsde}
	\left\{
	\begin{aligned}
		-dy_t=&\Big\{f_1y_t+f_2z_t+f_3u_t +\overline f_1\E[y_t]+\overline f_2\E[z_t]+\overline f_3\E[u_t]\Big\}dt\\ &+\Big\{g_1y_t+g_2z_t+g_3u_t +\overline g_1\E[y_t]+\overline g_2\E[z_t]+\overline g_3\E[u_t]\Big\}d\overleftarrow B_t-z_tdW_t,\\
		dp_t=&\Big\{f_1p_t+\overline f_1\E[p_t]+g_1q_t+\overline g_1\E[q_t]+h_1y_t+\overline h_1\E[y_t]\Big\}dt\\&+\Big\{f_2p_t+\overline f_2\E[p_t]+g_2q_t+\overline g_2\E[q_t]+h_2z_t+\overline h_2\E[z_t]\Big\}dW_t-q_td\overleftarrow B_t,\\
		y_T=&\xi, ~ p_0=\Phi y_0+\overline \Phi\E[y_0].
	\end{aligned}
	\right.
\end{equation}
with $u_t$ being given by \eqref{u} admits a solution, by the verification theorem in Section \ref{sufficiency},  the control process \eqref{u} is indeed the unique  optimal control.
Substituting \eqref{Eu} and \eqref{u} for $\E[u]$ and $u$  respectively leads to a strong coupling between the forward and backward equations in \eqref{fbdsde}, and we cannot apply Theorem \ref{ex-uni-fbdsde} due to the lack of  monotonicity assumed in condition (A2). 
In the rest of this subsection, we shall prove the existence and uniqueness of the solution under some weaker conditions which are satisfied by \eqref{fbdsde} without the terms of $\E[u_t]$.

Consider the following mean-field FBDSDE
\begin{equation}\label{mf-fbdsde2}
	\left\{
	\begin{aligned}
		-dy_t=&f(t,y_t,Cp_t,z_t,Dq_t,\mathcal L(y_t,Cp_t,z_t,Dq_t))dt\\&+g(t,y_t,Cp_t,z_t,Dq_t,\mathcal L(y_t,Cp_t,z_t,Dq_t))d\overleftarrow B_t-z_tdW_t,\\
		dp_t=&F(t,y_t,p_t,z_t,q_t,\mathcal L(y_t,p_t,z_t,q_t))dt\\&+G(t,y_t,p_t,z_t,q_t,\mathcal L(y_t,p_t,z_t,q_t))dW_t-q_td\overleftarrow B_t.\\
		y_T=&\xi, ~p_0=\Psi(y_0,\mathcal L(y_0)),
	\end{aligned}
	\right.
\end{equation}
where $C$ and $D$ are matrices of dimension $n\times n$. For simplicity, here we set $l=d=1$. We introduce below conditions (B1)-(B2)  which are parallel to but weaker than (A1)-(A2) imposed in Section \ref{4}.
As in Section~\ref{4}, we use the   notations $\zeta=(y,p,z,q)$ and $\mathcal A(t,\zeta,\mu)=(-F,\breve f,-G,\breve g)(t,\zeta,\mu)$, where $\breve f(t,\zeta, \mu)=f(t,y, Cp, z, Dq, \mathcal L(y, Cp, z, Dq))$ and similarly for $\breve g$.  
\begin{enumerate}
	\item [(B1)]There exist constants $c_1\geq 0, c_2>0$ such that
	\begin{align*}
		\E\big[\big<\mathcal A(t,\zeta,\mu)-\mathcal A(t,\zeta',\mu'),\widehat\zeta\big>\big]&\leq -c_1\E\big[|\widehat y|^2+|\widehat z|^2\big]-c_2\E\big[|C\widehat p+D\widehat q|^2\big],\\
		\E\big[\big<\Psi(y,\mu_y)-\Psi(y',\mu_y'), \widehat y\big>\big]&\geq 0,
	\end{align*}
for $ \zeta,\zeta'\in \R^{n}\times\R^{n}\times\R^{n}\times\R^{n}, \mu,\mu'\in \mathcal P_2(\R^{n}\times\R^{n}\times\R^{n}\times\R^{n}), \mu_y,\mu_y'\in \mathcal P_2(\R^n)$, $\widehat\zeta=(\widehat y,\widehat p,\widehat z,\widehat q)=(y-y',p-p',z-z',q-q')$.
\medskip
	\item [(B2)]There exists a  constant $c_3>0$ such that
	\begin{align*}
		|\mathcal A(t,\zeta,\mu)-\mathcal A(t,\zeta',\mu')|&\leq c_3\big(|\zeta-\zeta'|+W_2(\mu,\mu')\big),\\
		|\Psi(y,\mu_y)-\Psi(y',\mu'_y)|&\leq c_3\big(|y-y'|+W_2(\mu_y,\mu'_y)\big).
	\end{align*}
	Moreover, we assume that there exist $\lambda_1,\lambda_2>0$ with $\lambda_1+\lambda_2<1$ such that for $\forall t\in[0,T]$,
	\begin{align*}
		&\big|f(t,y,Cp,z,Dq,\mathcal L(y,Cp,z,Dq))-f(t,y',Cp',z',Dq',\mathcal L(y',Cp',z',Dq'))\big|^2\\&\leq  c_3\big(|\widehat y|^2+|\widehat z|^2+|C\widehat p+D\widehat q|^2+\E\big[|\widehat y|^2+|\widehat z|^2+|C\widehat p+D\widehat q|^2\big]\big),\\
		&\big|g(t,y,Cp,z,Dq,\mathcal L(y,Cp,z,Dq))-g(t,y',Cp',z',Dq',\mathcal L(y',Cp',z',Dq'))\big|^2\\&\leq  c_3\big(|\widehat y|^2+|C\widehat p+D\widehat q|^2+\E\big[|\widehat y|^2+|C\widehat p+D\widehat q|^2\big]\big)+\lambda_1|\widehat z|^2+\lambda_2\E\big[|\widehat z|^2\big],\\
		&\big|F(t,y,p,z,q,\mu)-F(t,y',p',z',q',\mu')\big|^2\leq c_3\big(|\widehat\zeta|^2+W_2^2(\mu,\mu')\big),\\
		&\big|G(t,y,p,z,q,\mu)-G(t,y',p',z',q',\mu')\big|^2\\&\leq c_3\big(|\widehat y|^2+|\widehat z|^2+|\widehat p|^2+\E\big[|\widehat y|^2+|\widehat z|^2+|\widehat p|^2\big]\big)+\lambda_1|\widehat q|^2+\lambda_2\E\big[|\widehat q|^2\big].
	\end{align*}
\end{enumerate}
\begin{theorem}\label{lq-eu}
	Under conditions (B1)-(B2), equation \eqref{mf-fbdsde2} admits a unique solution.
\end{theorem}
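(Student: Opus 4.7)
The plan is to adapt the method of continuation from Lemma \ref{pertubation} and Theorem \ref{ex-uni-fbdsde} to the weaker monotonicity in (B1), which only controls the combination $|C\hat p + D\hat q|^2$ rather than $|\hat p|^2$ and $|\hat q|^2$ separately. The essential compatibility that makes this work is the structural Lipschitz bound on $f, g$ in (B2), where the $(p, q)$-dependence enters only through the same combination $Cp + Dq$. I will introduce a homotopy $\alpha \in [0, 1]$ interpolating between \eqref{mf-fbdsde2} at $\alpha = 1$ and a tractable linear system at $\alpha = 0$: add $(1-\alpha) c_1 y$ to $F$, $(1-\alpha) c_1 z$ to $G$, $-(1-\alpha) c_2 C^\T(Cp + Dq)$ to $\breve f$, $-(1-\alpha) c_2 D^\T(Cp + Dq)$ to $\breve g$, and replace $\Psi$ by $\alpha\Psi + (1-\alpha) y_0$. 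A direct computation shows that the added linear terms contribute exactly $-(1-\alpha)\bigl[c_1(|\hat y|^2 + |\hat z|^2) + c_2|C\hat p + D\hat q|^2\bigr]$ to the inner product $\E[\la\Delta\mathcal A^\alpha, \hat\zeta\ra]$, so (B1) is preserved with unchanged constants for every $\alpha$. At $\alpha = 0$ the homotopied system is a linear mean-field FBDSDE that can be solved as in \cite[Proposition 3.6]{10hpw}.

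The core step is a perturbation lemma analogous to Lemma \ref{pertubation}: if the homotopied equation is uniquely solvable at $\alpha_0 \in [0, 1)$ for arbitrary inhomogeneous data, it is also solvable at $\alpha_0 + \delta$ for some $\delta > 0$ independent of $\alpha_0$. I will set up the Picard map $I_{\alpha_0, \delta}$ exactly as in \eqref{e:fbdsde} and chain three a-priori bounds on the difference $\hat\zeta$ of two outputs. First, the product rule \eqref{e:prod-rule} applied to $\la \hat y_t, \hat p_t\ra$, combined with (B1), controls $c_1\E\int_0^T(|\hat y|^2 + |\hat z|^2)\,dt + c_2\E\int_0^T|C\hat p + D\hat q|^2\,dt$ by $\delta$ times the input norm. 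Second, It\^o's formula \eqref{e:ito} applied to $|\hat y_t|^2$, using the Lipschitz bounds on $f, g$ from (B2) and exploiting the contractive structure $\lambda_1 + \lambda_2 < 1$ in the $\hat z$-term of $g$, controls $\sup_t \E|\hat y_t|^2 + \E\int_0^T|\hat z|^2\,dt$ by $\E\int_0^T|C\hat p + D\hat q|^2\,dt$ plus $\delta$-perturbations. Third, It\^o on $|\hat p_t|^2$ using the Lipschitz bounds on $F, G$ from (B2), where the contractive constants now govern the $\hat q$-term of $G$, controls $\sup_t \E|\hat p_t|^2 + \E\int_0^T|\hat q|^2\,dt$ by $\E|\hat y_0|^2 + \E\int_0^T(|\hat y|^2 + |\hat z|^2)\,dt$ plus $\delta$-perturbations. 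Chaining the three bounds yields
\begin{equation*}
\E|\hat y_0|^2 + \E\int_0^T|\hat\zeta_t|^2\,dt \le \delta K \Big(\E|\hat{\bar y}_0|^2 + \E\int_0^T|\hat{\bar\zeta}_t|^2\,dt\Big),
\end{equation*}
with $K$ depending only on $c_1, c_2, c_3, \lambda_1, \lambda_2, T$ and the norms of $C, D$; taking $\delta_0 = 1/(2K)$ gives the contraction, and a finite iteration bridges $\alpha = 0$ to $\alpha = 1$, providing existence for \eqref{mf-fbdsde2}.

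Uniqueness at $\alpha = 1$ is obtained by applying the same three estimates with zero inhomogeneous data: the first forces $C\hat p + D\hat q \equiv 0$ (using $\E[\la\Delta\Psi, \hat y_0\ra] \ge 0$ from (B1)); the second then yields $\hat y = \hat z = 0$, so that $\hat p_0 = \Psi(y_0^1,\mathcal L(y_0^1)) - \Psi(y_0^2,\mathcal L(y_0^2)) = 0$; and the third, combined with Grönwall's lemma and the contractive $G$-structure, gives $\hat p = \hat q = 0$. The main obstacle is the case $c_1 = 0$ in the chaining: then $\E|\hat y_0|^2$ and $\E\int_0^T(|\hat y|^2 + |\hat z|^2)\,dt$ are not controlled by the first estimate alone, and one must first invoke the forward BDSDE step to bound $\hat y, \hat z$ through $C\hat p + D\hat q$ before feeding the bound into the backward estimate for $\hat p, \hat q$. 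This case split parallels the one at the end of the proof of Lemma \ref{pertubation}.
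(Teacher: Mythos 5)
Your proposal is correct and follows essentially the same route as the paper: a continuation argument whose contraction estimates are chained exactly as you describe (product rule on $\la\hat y_t,\hat p_t\ra$ to control $\E\int_0^T|C\hat p_t+D\hat q_t|^2dt$, then It\^o's formula on $|\hat y_t|^2$ and on $|\hat p_t|^2$, exploiting that $f,g$ see $(p,q)$ only through $Cp+Dq$ and that $\lambda_1+\lambda_2<1$), followed by the same three-step uniqueness argument ending with the classical BDSDE result for $(\hat p,\hat q)$. The only material difference is in the homotopy: the paper's \eqref{mffbdsdep} adds only $-(1-\alpha)(C^{\T}Cp+C^{\T}Dq)$ and $-(1-\alpha)(D^{\T}Cp+D^{\T}Dq)$ to the forward coefficients while keeping $\alpha F$, $\alpha G$ and $\alpha\Psi+\Psi_0$, so the $\alpha=0$ system is triangular and directly solvable by \cite{94pp}, whereas your extra $(1-\alpha)c_1$-terms in $F,G$ and the $(1-\alpha)y_0$ in $\Psi$ leave a genuinely coupled linear system at $\alpha=0$ whose solvability you would still need to justify along the lines of \cite[Proposition 3.6]{10hpw}.
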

\begin{proof}
First we prove the uniqueness. Let $\zeta=(y,z,p,q)$ and $\zeta'=(y',p',z',q')$ be two solutions of \eqref{mf-fbdsde2}. Applying It\^o's formula to $\big<\widehat y_t,\widehat p_t\big>$ and taking expectation yield
\begin{align*}
	\E\big[\big<\widehat y_0,\Psi(y_0,\mathcal L(y_0))-\Psi(y'_0,\mathcal L(y'_0))\big>\big]=\E\int_0^T\big<\mathcal A(t,\zeta_t,\mu_t)-\mathcal A(t,\zeta_t',\mu_t'),\widehat \zeta_t\big>dt. 
\end{align*}
This together with condition (B1) implies
\begin{align*}
	c_2\E\int_0^T\big|C\widehat p_t+D\widehat q_t\big|^2dt\leq 0,
\end{align*}
and recalling that $c_2>0$, we have 
\begin{equation}\label{e:pq0}
|C\widehat p_t+D\widehat q_t|^2=0, \text{ for almost all } t\in[0,T].
\end{equation}
Now we deal with $|\widehat y_t|^2$ in a similar way. Using Lipschitz conditions  on $f$ and $g$ in (B2) and taking \eqref{e:pq0} into account,  we can get
\[\E[|\widehat y_t|^2] +\frac12 \E \int_t^T |\widehat z_s|^2 ds \le c_0  \E\int_t^T |\widehat y_s|^2 ds,\] 
for some positive constant $c_0$. This implies $\widehat y\equiv 0$ by Gronwall's inequality and hence $\widehat z\equiv 0$. The uniqueness of $(p,q)$ then follows directly from classical result for BDSDEs.

To obtain the existence of the solution, we consider the following equation:
\begin{equation}\label{mffbdsdep}
	\left\{
	\begin{aligned}
		-dy_t=&\Big\{\alpha f(t,y_t,Cp_t,z_t,Dq_t,\mathcal L(y_t,Cp_t,z_t,Dq_t))-(1-\alpha)(C^\text{T}Cp_t+C^\text{T}Dq_t)+f_0(t)\Big\}dt\\&+\Big\{\alpha g(t,y_t,Cp_t,z_t,Dq_t,\mathcal L(y_t,Cp_t,z_t,Dq_t))-(1-\alpha)(D^\text{T}Cp_t+D^\text{T}Dq_t)+g_0(t)\Big\}d\overleftarrow B_t\\&-z_tdW_t,\\
		dp_t=&\Big\{\alpha F(t,y_t,p_t,z_t,q_t,\mathcal L(y_t,p_t,z_t,q_t))+F_0(t)\Big\}dt\\&+\Big\{\alpha G(t,y_t,p_t,z_t,q_t,\mathcal L(y_t,p_t,z_t,q_t))+G_0(t)\Big\}dW_t-q_td\overleftarrow B_t,\\
		y_T=&\xi, ~ p_0=\alpha\Psi(y_0,\mathcal L(y_0))+\Psi_0.
	\end{aligned}
	\right.
\end{equation}
Clearly, \eqref{mffbdsdep} with $\alpha=1$ coincides with \eqref{mf-fbdsde2}, and  when $\alpha=0$, the existence and uniqueness follows directly from \cite{94pp}. As in Section \ref{4}, we shall take the method of continuation and prove the result of Lemma \ref{pertubation} under conditions (B1)-(B2). More precisely, given $\alpha_0\in[0,1)$ and $\overline\zeta=(\overline y,\overline p,\overline z,\overline q)\in L^2_\mathcal F([0,T];\R^n\times \R^n\times\R^{n}\times \R^{n})$,  we consider 
\begin{equation}\label{e:linear-fbdsde}
	\left\{
	\begin{aligned}
		-dy_t=&\Big\{\alpha_0 f(t,y_t,Cp_t,z_t,Dq_t,\mathcal L(y_t,Cp_t,z_t,Dq_t))-(1-\alpha_0)(C^\text{T}Cp_t+C^\text{T}Dq_t)\\&\hspace{0.5em}+\delta f(t,\overline y_t,C\overline p_t,\overline z_t,D\overline q_t,\mathcal L(\overline y_t,C\overline p_t,\overline z_t,D\overline q_t))+\delta(C^\text{T}C\overline p_t+C^\text{T}D\overline q_t)+f_0(t)\Big\}dt\\&+\Big\{\alpha_0 g(t,y_t,Cp_t,z_t,Dq_t,\mathcal L(y_t,Cp_t,z_t,Dq_t))-(1-\alpha_0)(D^\text{T}Cp_t+D^\text{T}Dq_t)\\&\hspace{1em}+\delta g(t,\overline y_t,C\overline p_t,\overline z_t,D\overline q_t,\mathcal L(\overline y_t,C\overline p_t,\overline z_t,D\overline q_t))+\delta(D^\text{T}C\overline p_t+D^\text{T}D\overline q_t)+g_0(t)\Big\}d\overleftarrow B_t\\&-z_tdW_t,\\
		dp_t=&\Big\{\alpha_0 F(t,y_t,p_t,z_t,q_t,\mathcal L(y_t,p_t,z_t,q_t))+\delta F(t,\overline y_t,\overline p_t,\overline z_t,\overline q_t,\mathcal L(\overline y_t,\overline p_t,\overline z_t,\overline q_t))+F_0(t)\Big\}dt\\&+\Big\{\alpha_0 G(t,y_t,p_t,z_t,q_t,\mathcal L(y_t,p_t,z_t,q_t))+\delta G(t,\overline y_t,\overline p_t,\overline z_t,\overline q_t,\mathcal L(\overline y_t,\overline p_t,\overline z_t,\overline q_t))+G_0(t)\Big\}dW_t\\&-q_td\overleftarrow B_t,\\
		y_T=&\xi, ~ p_0=\alpha_0\Psi(y_0,\mathcal L(y_0))+\delta\Psi(\overline y_0,\mathcal L(\overline y_0))+\Psi_0,
	\end{aligned}
	\right.
\end{equation}
and shall prove that the mapping $\mathbb I _{\alpha_0,\delta}(\overline \zeta)=(\zeta)$ defined by \eqref{e:linear-fbdsde} is contractive for $\delta$ which is small but independent of $\alpha_0$.

Applying the product rule \eqref{e:prod-rule} to $\big<\widehat y_t, \widehat p_t\big>$, taking expectation and using  (B1)-(B2), 
we can get the following estimation which is parallel to \eqref{e:est-1}: there exists a constant $C_1$ only depending on $c_1,c_2,c_3$ such that 
\begin{align*}
	&\E\int_0^T|C\widehat p_t+D\widehat q_t|^2dt\leq \delta C_1\Bigg\{ \E\int_0^T\big\{|\widehat {\overline \zeta}_t|^2+|\widehat \zeta_t|^2\big\}dt+\E\big[|\widehat {\overline y}_0|^2+|\widehat y_0|^2\big]\Bigg\}.
\end{align*}
Applying It\^o's formula to $|\widehat y_t|^2$ and taking expectation, we can get the following estimates:
\begin{align*}
	&\E\big[|\widehat y_t|^2\big]\leq C_2\E\int_0^T|C\widehat p_t+D\widehat q_t|^2dt+\delta C_2\E\int_0^T|\widehat {\overline \zeta}_t|^2dt,\\
	&\E\int_0^T\{ |\widehat y_t|^2+|\widehat z_t|^2\}dt\leq C_3\E\int_0^T|C\widehat p_t+D\widehat q_t|^2dt+\delta C_3\E\int_0^T|\widehat {\overline \zeta}_t|^2dt.
\end{align*}
Similarly,  we can also get
\begin{align*}
	&\E\int_0^T\{|\widehat p_t|^2+|\widehat q_t|^2\}dt\leq C_4\E\int_0^T\{|\widehat y_t|^2+|\widehat z_t|^2\}dt+\delta C_4\E\int_0^T|\widehat {\overline \zeta}_t|^2dt+C_4\E\big[|\widehat y_0|^2\big]+\delta C_4\E\big[|\widehat{\overline y} _0|^2\big].
\end{align*}
Combining the above estimates, we can find a constant $L$ only dependent on $c_1,c_2,c_3,\lambda_1,\lambda_2$ and $T$, such that
\begin{align*}
	\E\int_0^T|\widehat \zeta_t|^2dt+\E\big[|\widehat y_0|^2\big]\leq \delta L\left(\E\int_0^T|\widehat {\overline \zeta}_t|^2dt+\E\big[|\widehat {\overline y}_0|^2\big]\right).
\end{align*}
Hence, if we choose $\delta=\tfrac{1}{2L}$,   $\mathbb I_{\alpha_0, \delta}$ is a contraction mapping and thus equation \eqref{mffbdsdep} admits a solution for $\alpha=\alpha_0+\delta$.  Noting that the choice of $\delta$ is independent of $\alpha_0$, one can repeat this procedure and show that \eqref{mffbdsdep} has a solution for all $\alpha\in[0,1]$. In particular, this implies the existence of solution to \eqref{mf-fbdsde2}. 

The proof is concluded. 
\end{proof}

Now, we reconsider the stochastic linear quadratic problem that does not depend on the distribution of the control process, i.e.,  $\overline f_3,\overline g_3, \overline h_3=0$. 
In such situation, the optimal control $u$ given by \eqref{u} becomes
\begin{align}\label{u2}
	u_t=-\tfrac{1}{h_3}(f_3p_t+g_3q_t),
\end{align}
and the Hamiltonian system \eqref{fbdsde} now is 
\begin{equation}\label{fbdsde-22}
	\left\{
	\begin{aligned}
		-dy_t=&\Big\{f_1y_t+f_2z_t-\tfrac{f_3}{h_3}(f_3p_t+g_3q_t) +\overline f_1\E[y_t]+\overline f_2\E[z_t]\Big\}dt\\ &+\Big\{g_1y_t+g_2z_t-\tfrac{g_3}{h_3}(f_3p_t+g_3q_t) +\overline g_1\E[y_t]+\overline g_2\E[z_t]\Big\}d\overleftarrow B_t-z _tdW_t,\\
		dp_t=&\Big\{f_1p_t+\overline f_1\E[p_t]+g_1q_t+\overline g_1\E[q_t]+h_1y_t+\overline h_1\E[y_t]\Big\}dt\\&+\Big\{f_2p_t+\overline f_2\E[p_t]+g_2q_t+\overline g_2\E[q_t]+h_2z_t+\overline h_2\E[z_t]\Big\}dW_t-q_td\overleftarrow B_t,\\
		y_T=&\xi,  p_0=\Phi y_0+\overline \Phi\E[y_0].
	\end{aligned}
	\right.
\end{equation}
It can be easily checked that the coefficients in \eqref{fbdsde-22} satisfy (B1)-(B2) (we remark that the monotonicity condition in (A2) is not satisfied, though). By Theorem \ref{lq-eu}, there exists a unique  solution to \eqref{fbdsde-22}. Thus, equations \eqref{u2} together with  \eqref{fbdsde-22}  provides a unique optimal control for the mean-field backward doubly stochastic LQ problem without involving the distribution of control. 

\begin{remark}
When the mean-field FBDSDE \eqref{mf-fbdsde2} is reduced to classical FBDSDE (without mean field), Theorem \ref{lq-eu} recovers the existence and uniqueness result  obtained in  \cite[Theorem 3.8]{10hpw}.
\end{remark}

\section*{Acknowledgements}
The authors would like to thank Tianyang Nie for his helpful discussions. J. Song is partially supported by Shandong University (Grant No. 11140089963041) and the National Natural Science Foundation of China (Grant No. 12071256).

\bibliographystyle{plain}
\bibliography{Reference-mfb}
\end{document}